\DeclareMathOperator{\dom}{dom}
\newcounter{my_enumerate_counter}
\newcommand{\pushcounter}{\setcounter{my_enumerate_counter}{\value{enumi}}}
\newcommand{\popcounter}{\setcounter{enumi}{\value{my_enumerate_counter}}}
\DeclareMathOperator{\Th}{Th}
\newcommand{\cP}{\mathcal P}
\newcommand{\bbC}{{\mathbb C}}
\newcommand{\bbN}{{\mathbb N}}
\newcommand{\calL}{{\mathcal L}}
\newcommand{\cU}{{\mathcal U}}
\newcommand{\tr}{{\rm tr}}
\newcommand{\cstar}{$\mathrm{C}^*$}
\DeclareMathOperator{\eq}{eq}
\newcommand{\bbM}{{\mathbb M}}
\newcommand{\bbF}{{\mathbb F}}
\newcommand{\bbQ}{\mathbb Q}
\newcommand{\cI}{{\mathcal I}}
\newcommand{\calR}{{\mathcal R}}
\newcommand{\cX}{{\mathcal X}}
\newcommand{\cY}{{\mathcal Y}}
\newcommand{\cC}{\mathcal C}
\newcommand{\cB}{\mathcal B}
\newcommand{\cM}{\mathcal M}
\newcommand{\dminus}{\dot -}
\newtheorem{thm}{Theorem}%[section]
\newtheorem{theorem}{Theorem}[section] %%%%%%%%%%%%%%% IF changed
\newtheorem{corollary}[theorem]{Corollary}
\newtheorem{conjecture}[theorem]{Conjecture}
\newtheorem{claim}[theorem]{Claim}
\newtheorem{lemma}[theorem]{Lemma}
\newtheorem{proposition}[theorem]{Proposition}
\theoremstyle{definition}
\newtheorem{definition}[theorem]{Definition}
\DeclareMathOperator{\trm}{tr_m}
\DeclareMathOperator{\LMBA}{\calL_{\text{MBA}}}
\newcommand{\botimes}{\bar\otimes} % tensor product of von Neumann algebras
\DeclareMathOperator{\frFx}{\mathfrak F^{\bar x}}
\newcommand{\bbTD}{\mathbb T_D}
\newcommand{\LLONE}{\mathcal L_{\|\cdot\|_1}}
\newcommand{\LLTWO}{\mathcal L_{\|\cdot\|_2}}
\numberwithin{equation}{section}
\begin{document}

%\tracinggroups=2

\title[Preservation of elementarity\dots]{Preservation of elementarity by tensor products of tracial von Neumann algebras}
\date{\today}

\author{Ilijas Farah}

\address{Department of Mathematics and Statistics\\
	York University\\
	4700 Keele Street\\
	North York, Ontario\\ Canada, M3J 1P3\\
	and 
	Matemati\v cki Institut SANU\\
	Kneza Mihaila 36\\
	11\,000 Beograd, p.p. 367\\
	Serbia}
\email{ifarah@yorku.ca}
\urladdr{https://ifarah.mathstats.yorku.ca}

\author{Saeed Ghasemi}

\address{Department of Mathematics and Statistics\\
	York University\\
	4700 Keele Street\\
	North York, Ontario\\ Canada, M3J 1P3\\
	and 
	Lakehead University\\
	955 Oliver Road
	Thunder Bay, Ontario\\
	Canada P7B 5E1 (current address)}
\urladdr{https://www.lakeheadu.ca/users/G/sghasem2/node/201808}
\email{sghasem2@lakeheadu.ca}
\maketitle

\begin{abstract}
	Tensoring with type I algebras preserves elementary equivalence in the category of tracial von Neumann algebras. The proof involves a novel and general Feferman--Vaught-type theorem for direct integrals of metric structures. 
\end{abstract}
In \cite{feferman1959first} it was proven that reduced products and their generalizations preserve elementary equivalence, in the sense that the first-order theory of the product can be computed from the theories of the factors and information about the ideal (if any) used to form the reduced product. The question of preservation of elementarity by tensor products and free products is a bit subtler. Somewhat surprisingly, free products preserve elementary equivalence both in the case of groupoids (\cite{olin1974free}) and, by a deep result of Sela, in the case of groups (\cite{sela2010diophantine}). It is not known whether this is the case with tracial von Neumann algebras (\cite[Question~5.3]{jekel2022free}). 

On the other hand, tensor products in general do not preserve elementary equivalence in the category of modules (\cite{Olin:Direct}) and in the category of \cstar-algebras (\cite{Muenster}, \cite{farah2022between}).  
David Jekel asked whether tensor products of tracial von Neumann algebras preserve elementary equivalence (\cite[\S 5.1]{jekel2022free}). We give  partial positive answers to this question (see \S\ref{S.elementarity} for the notation and terminology). 

\begin{thm} \label{T.tensor} If $M$ and $N$ are tracial von Neumann algebras at least one of which is type I, then the theory of their tensor product depends only on theories of $M$ and $N$. 
	
	In other words, if $M_1\equiv M$ and $N_1\equiv N$ then $M_1\bar\otimes N_1\equiv M\otimes N$. 
	
	More precisely, if $M_1\preceq M$ and $N_1\preceq N$ then (with the natural identification) $M_1\bar\otimes N_1\preceq M\bar \otimes N$. 
\end{thm}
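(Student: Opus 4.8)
The plan is to reduce the whole problem to a single structural decomposition and then invoke a Feferman--Vaught theorem for direct integrals. Assume without loss of generality that $M$ is the type I factor; since being type I is an elementary property (preserved under $\equiv$), $M_1$ is type I as well. A type I tracial von Neumann algebra has no type $I_\infty$ summand, so its central decomposition is $M = \bigoplus_{n} M_n(\bbC) \bar\otimes L^\infty(X_n,\mu_n)$, where $L^\infty(X_n,\mu_n)$ is the center of the homogeneous type $I_n$ part. Tensoring with $N$ and absorbing the abelian factor into a direct integral gives
\[ M \bar\otimes N \;\cong\; \bigoplus_n M_n\bigl(L^\infty(X_n,\mu_n)\bar\otimes N\bigr) \;\cong\; \int_X^{\oplus} M_{n(x)}(N)\, d\mu(x), \]
where $X=\bigsqcup_n X_n$ and $n(x)=n$ on $X_n$. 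Thus $M\bar\otimes N$ is a direct integral over $X$ of matrix amplifications of the single algebra $N$, and everything reduces to showing that the theory of such a direct integral is controlled by the fiberwise theories and the base measure algebra.

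The main tool is the direct-integral Feferman--Vaught theorem announced in the abstract: for a measurable field $x\mapsto P_x$ of tracial von Neumann algebras, the continuous theory of $\int_X^\oplus P_x\,d\mu(x)$ is computed, formula by formula, from the measurable field of fiberwise values integrated against $\mu$, so that it depends only on the measurable field $x\mapsto\Th(P_x)$ and the measure algebra $(X,\mu)$. I would apply this to $P_x=M_{n(x)}(N)$. Since matrix amplification is an interpretation (hence preserves $\equiv$ and $\preceq$, and $\Th(M_k(P))$ is recursive in $k$ and $\Th(P)$), the fiberwise theory $\Th(P_x)$ depends only on the step function $n(x)$ and on $\Th(N)$. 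The measure algebra $(X,\mu)$ is, up to elementary equivalence, the center $Z(M)$, and the weights $\mu(X_n)=\tau$ of the maximal type $I_n$ central projection together with the measure-algebra type of each $(X_n,\mu_n)$ are exactly what $\Th(M)$ records. Combining, $\Th(M\bar\otimes N)$ is a function of $\Th(M)$ and $\Th(N)$, which is the first assertion.

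For the elementary-embedding refinement, suppose $M_1\preceq M$ and $N_1\preceq N$. Then $M_1$ is type I with the same matrix sizes $n$, and elementarity of $M_1\preceq M$ fixes the weights and the measure-algebra type of each summand, so that the abelian centers satisfy $L^\infty(X_n^{(1)},\mu_n^{(1)})\preceq L^\infty(X_n,\mu_n)$; fiberwise, $M_n(N_1)\preceq M_n(N)$ by interpretability of matrix amplification. Feeding these compatible fiberwise elementary embeddings, together with the elementary inclusion of base measure algebras, into the relative (embedding) form of the direct-integral Feferman--Vaught theorem yields $M_1\bar\otimes N_1\preceq M\bar\otimes N$ under the natural identification.

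The heart of the argument, and the main obstacle, is establishing the direct-integral Feferman--Vaught theorem itself in the metric setting. Three points require care. First, one must produce, for each formula $\varphi$, a reduction expressing the value $\varphi^{\int^\oplus}(\bar a)$ as an integral over $X$ of fiberwise values of associated formulas applied to the components $x\mapsto\bar a(x)$; the real-valued connectives and, crucially, the trace $\tau(a)=\int\tau_x(a(x))\,d\mu(x)$ must be threaded through this reduction. Second, quantifiers are the delicate case: a supremum over the unit ball of the direct integral must be matched with the integral of fiberwise suprema, which requires a measurable-selection argument assembling fiberwise near-optima into a genuine measurable field, together with control of the uniform-continuity moduli uniformly in $x$. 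Third, one must check that the reduction is insensitive to null-set modifications and respects the $L^2$-completion defining the fibers, so that the computed value is well defined. Once this theorem is in place, the reduction to the type I case via the central decomposition and the interpretability of matrix amplification make the remainder routine bookkeeping.
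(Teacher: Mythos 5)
Your overall route is the same as the paper's: decompose the type I algebra into homogeneous summands $\bbM_n(\bbC)\bar\otimes L^\infty(X_n,\mu_n)$, treat matrix amplification as an interpretation (this is Lemma~\ref{L.eq}, via $N^{\eq}$), and reduce the abelian tensoring to the direct-integral Feferman--Vaught theorem. For the $\equiv$ statement, and for the case where the type~I factor is held fixed while $N_1\preceq N$ varies, this works with exactly the tools the paper provides (Corollaries~\ref{FV-corollary 1} and~\ref{FV-corollary 2}). The genuine gap is in the elementary-embedding refinement, where you let \emph{both} factors vary at once and propose to feed ``the elementary inclusion of base measure algebras'' into a ``relative (embedding) form'' of the Feferman--Vaught theorem. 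No such relative form is available, either in the paper or in your own sketch: the embedding version of Corollary~\ref{FV-corollary 1} requires the two direct integrals to live over the \emph{same} measure space, with the smaller algebra a measurable subfield in the sense of Definition~\ref{Def.DirectIntegral.2}. When $M_1\subsetneq M$, the subalgebra $M_1\bar\otimes N_1$ of $\int_X^\oplus \bbM_{n(x)}(N)\,d\mu(x)$ consists of fields measurable with respect to a proper sub-$\sigma$-algebra of the base (the one corresponding to $Z(M_1)\subseteq Z(M)$); it is not a subfield over the same base, so the fiberwise theorem does not apply. An FV theorem in which the base measure algebra itself varies along an elementary inclusion is a genuinely stronger statement requiring additional proof (one would at least need that the induced embedding of measure algebras is elementary and that the $\LMBA$-formulas $G_{\varphi,k}$, which contain quantifiers over the measure algebra, are preserved by it).

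The paper circumvents precisely this difficulty by proving two one-sided statements and composing them by transitivity of $\preceq$: (1) with the type I algebra $M$ fixed, $N_1\preceq N$ gives $M\bar\otimes N_1\preceq M\bar\otimes N$ (same base, only fibers vary); (2) with the other factor fixed and the type I factor varying, $N_1\preceq N$, the changing abelian base is tamed not by a relative FV theorem but by Lemma~\ref{L.rho}: the abelian parts $P_{1,m}$ and $P_m$ have matching atoms and diffuse weights, hence are isomorphic, which again reduces the problem to a fixed base. Your single-shot argument should be replaced by this factorization, e.g.\ $M_1\bar\otimes N_1\preceq M_1\bar\otimes N\preceq M\bar\otimes N$. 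A secondary issue: your parenthetical claim that ``being type I is an elementary property'' needs care --- the paper points out that type I is \emph{not} axiomatizable (it is not preserved by ultraproducts); its preservation under $\equiv$, and the finer facts you invoke (that the theory determines the matrix sizes, the weights, and the measure-algebra data of each summand), are exactly Proposition~\ref{P.typeI} and Lemma~\ref{L.rho}, which require a nontrivial argument through definability of the center and axiomatizability of $m$-subhomogeneity, rather than being quotable as standard.
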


In the course of proving Theorem~\ref{T.tensor} we prove a Feferman--Vaught type theorem for direct integrals of metric structures (Theorem~\ref{T.FV-integral}). This proof roughly follows the lines of the proof of the Feferman--Vaught theorem for metric reduced products given in \cite{ghasemi2016reduced} (see also \cite[\S 16]{Fa:STCstar}).
Also, standard results imply that among McDuff factors, tensoring with the hyperfinite II$_1$ factor preserves elementarity (see \S\ref{S.concluding}). 
Unlike \cstar-algebras, among tracial von Neumann algebras there is no known example of a failure of preservation of elementarity by tensor products. 

\subsection*{Acknowledgements} We would like to thank participants of the  reading seminar on free probability and model theory held at York University in the fall semester of 2022 and to Tom\'as Ibarlucia for a very useful remark. We are also indebted to Ita\"  i Ben Ya'acov,  David Gao, David Jekel, and the anonymous referees for remarks on an earlier draft on this paper. 

\section{Preliminaries}

Good  general references are, for operator algebras~\cite{Black:Operator}, for type II$_1$ factors~\cite{anantharaman2017introduction},  
		  and for continuous model theory \cite{BYBHU}, \cite{hart2023introduction}, \cite{goldbring2022survey}.

\subsection{Direct integrals}
Our original definition of measurable fields of metric structures and direct integrals of metric structures was analogous to measurable fields and  direct integrals of Hilbert spaces and of von Neumann algebras (\cite[Definitions~IV.8.9, IV.8.15, and IV.8.17]{Tak:TheoryI}). Here we include the more polished definition from~\cite[\S 8]{yaacov2024extremal}.\footnote{A visible, but ultimately insubstantial, technical difference between this definition and the standard definition of a direct integral of tracial von Neumann algebras will be discussed in \S\ref{S.vNA}.}  Although this definition can be extended to the case of nonseparable metric spaces, we will consider only the separable case. For simplicity, we will restrict our attention to the case when $\calL$ is a single-sorted language; the definition is generalized to multi-sorted languages by making obvious modifications. 

\begin{definition}[Measurable fields and direct integrals of metric structures] \label{Def.DirectIntegral}
	Suppose that $\calL$ is a continuous language, $(\Omega,\cB,\mu)$ is a separable probability measure space, and $(M_\omega, d_\omega)$, for $\omega\in \Omega$, are separable $\calL$-structures. Assume that $e_n$, for $n\in \bbN$, is a sequence in $\prod_{\omega\in \Omega} M_\omega$ such that the following two conditions hold. 
	\begin{enumerate}
		\item for every $\omega$ the set $\{e_n(\omega)\mid n\in \bbN\}$ is dense in $M_\omega$. 
		\item\label{2.Def.DirectIntegral} For every predicate $R(\bar x)$ in $\calL$ and every tuple $e_{\bar n}=\langle e_{n(0)}, \dots, e_{n(j-1)}\rangle$ of the appropriate sort, the function $\omega\mapsto R^{M_\omega}(e_{\bar n}(\omega))$ is measurable. 
	\end{enumerate}
	The structures $M_\omega$, together with the functions $e_n$, form a \emph{measurable field} of $\calL$-structures. 
	
	The \emph{direct integral of $M_\omega$, for $\omega\in \Omega$}, is the structure denoted 
	\[
	M=\int^\oplus_\Omega M_\omega\, d\mu(\omega)
	\] 
	and defined as follows. 
		Consider the set $\cM$ of all $a\in \prod_{\omega\in \Omega} M_\omega$ such that the functions
	\begin{align*}
		\omega&\mapsto d_\omega(a(\omega),e_n(\omega))
	\end{align*} 
	are measurable for all $n$. On $\cM$ consider the pseudo-metric 
	\[
	d^M(a,b)=\int_\Omega d_\omega(a(\omega),b(\omega))\, d\mu(\omega).
	\] 
	Then the domain of $M$ is defined to be the set of equivalence classes of functions in $\cM$ with respect to the equivalence relation defined by $a\sim b$ if $d^M(a,b)=0$, with the quotient metric $d$. 
\end{definition}

	\begin{lemma}
	If $F$ is an $n$-ary function symbol and $\bar a\in \cM^n$, then the interpretation $\omega\mapsto F^{M_\omega} (\bar a)$ is a measurable function. 

	If $R$ is an $n$-ary predicate symbol and $\bar a\in \cM^n$, then the interpretation $\omega\mapsto F^{M_\omega} (\bar a)$ is a measurable function. 
\end{lemma}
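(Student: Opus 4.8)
The plan is to prove both statements by the same measurable approximation scheme, reducing everything to condition~\eqref{2.Def.DirectIntegral} of Definition~\ref{Def.DirectIntegral} together with the uniform continuity of the symbols of $\calL$. First I would isolate the basic selection step: given $a\in\cM$ and $\e>0$, the sets $B_m=\{\omega : d_\omega(a(\omega),e_m(\omega))<\e\}$ are measurable because $a\in\cM$, and by the density condition of Definition~\ref{Def.DirectIntegral} they cover $\Omega$; disjointifying via $A_m=B_m\setminus\bigcup_{m'<m}B_{m'}$ yields a measurable partition of $\Omega$ together with a function $a^\e$ given by $a^\e(\omega)=e_m(\omega)$ for $\omega\in A_m$, which satisfies $d_\omega(a^\e(\omega),a(\omega))<\e$ for every $\omega$ and takes values among the $e_m$ measurably. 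Applying this coordinatewise to a tuple $\bar a\in\cM^n$ and passing to the common refinement produces, for each $k$, a measurable partition $\Omega=\bigsqcup_j C^k_j$ on each piece of which the approximating tuple $\bar a^{1/k}$ coincides with a fixed distinguished tuple $e_{\bar m(j)}$.

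For a predicate symbol $R$, on each piece $C^k_j$ the function $\omega\mapsto R^{M_\omega}(\bar a^{1/k}(\omega))$ equals $\omega\mapsto R^{M_\omega}(e_{\bar m(j)}(\omega))$, which is measurable by~\eqref{2.Def.DirectIntegral}; since there are countably many pieces, $\omega\mapsto R^{M_\omega}(\bar a^{1/k}(\omega))$ is measurable. Letting $\Delta_R$ be a modulus of uniform continuity for $R$, the bound $|R^{M_\omega}(\bar a^{1/k}(\omega))-R^{M_\omega}(\bar a(\omega))|\le\Delta_R(1/k)$ holds for every $\omega$, so $\omega\mapsto R^{M_\omega}(\bar a(\omega))$ is a uniform limit of measurable functions and is therefore measurable. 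This settles the predicate assertion (the second displayed claim, where $R$ should of course replace $F$).

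For a function symbol $F$, the assertion is that $F(\bar a)\in\cM$, i.e.\ that $\omega\mapsto d_\omega(F^{M_\omega}(\bar a(\omega)),e_p(\omega))$ is measurable for every $p$. Using the same approximations, on each piece $C^k_j$ this reduces to $\omega\mapsto d_\omega(F^{M_\omega}(e_{\bar m(j)}(\omega)),e_p(\omega))$, which is the interpretation of the atomic formula $d(F(\bar x),y)$ at the distinguished tuple $(e_{\bar m(j)},e_p)$ and hence measurable by~\eqref{2.Def.DirectIntegral}. Since $d_\omega$ is $1$-Lipschitz in each variable and $F$ has a modulus of uniform continuity $\Delta_F$, one has $|d_\omega(F^{M_\omega}(\bar a),e_p)-d_\omega(F^{M_\omega}(\bar a^{1/k}),e_p)|\le d_\omega(F^{M_\omega}(\bar a),F^{M_\omega}(\bar a^{1/k}))\le\Delta_F(1/k)$ uniformly in $\omega$, so the target function is again a uniform limit of measurable functions. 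The main obstacle is precisely this function case: the per-piece reduction requires measurability of $\omega\mapsto d_\omega(F^{M_\omega}(e_{\bar m}(\omega)),e_p(\omega))$, which is not the measurability of a predicate \emph{symbol} on a distinguished tuple but of an atomic formula involving the term $F(\bar x)$. The argument therefore rests on reading~\eqref{2.Def.DirectIntegral} for atomic formulas (equivalently, on being able to assume the generating field $\{e_n\}$ closed under the function symbols of $\calL$), and on arranging that every approximation error is uniform in $\omega$ so that measurability is preserved in the limit.
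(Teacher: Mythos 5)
Your proof is correct and follows essentially the same route as the paper's: both arguments approximate $\bar a$ uniformly by distinguished tuples on a countable measurable partition of $\Omega$ (obtained from density of the $e_n$ and the membership of $\bar a$ in $\cM$), invoke the uniform continuity modulus supplied by the syntax, and conclude that the evaluation is a uniform limit of measurable functions. Where the paper disposes of the function-symbol case with the single word ``analogous,'' your treatment is more informative: you correctly interpret measurability of $\omega\mapsto F^{M_\omega}(\bar a(\omega))$ as membership in $\cM$, and you pinpoint that the per-piece reduction needs measurability of $\omega\mapsto d_\omega\bigl(F^{M_\omega}(e_{\bar m}(\omega)),e_p(\omega)\bigr)$, which is condition \eqref{2.Def.DirectIntegral} of Definition~\ref{Def.DirectIntegral} read for atomic formulas (predicate symbols applied to terms) rather than for predicate symbols applied directly to the $e_n$; that reading, standard in the source from which the definition is taken, is precisely what the paper's ``analogous'' implicitly assumes, so your caveat is a clarification of the definition rather than a gap in your argument.
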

\begin{proof}
	We will prove the second assertion. Fix an $n$-ary predicate symbol~$R$ and an $n$-tuple $\bar a$ in $\cM$. Given $\varepsilon>0$, the syntax requires that there is  $\delta=\delta(\varepsilon)>0$ such that  for $n$-tuples $\bar x$ and~$\bar y$ in any $\calL$-structure $N$ we have that  $d(\bar a, \bar b)<\delta$ implies $|R^N(\bar x)-R^N(\bar y)|<\varepsilon$. 
	
	Since $e_n(\omega)$, for $n\in \bbN$, is dense in every $M_\omega$, there is a partition $\Omega=\bigsqcup_{i} X_i$ into measurable sets and a function $h\colon \bbN\times n\to \bbN$ such that for all $i$ and $j$ we have $d^{M_\omega}(a_j(\omega)-e_{h(i,j)}(\omega))<\delta$ for all $\omega\in X_i$. 
	Writing $\bar e(i)$ for the $n$-tuple in $\cM$ whose $j$-th coordinate agrees with $e_{h(i,j)}(\omega)$, the interpretation $\omega\mapsto R^{M_\omega}(\bar a)$ is uniformly  $\varepsilon$-approximated by 
	\[
	\omega\mapsto \sum_{i}\chi_{X_i}R^{M_\omega}(\bar e(i)). 
	\]
	By \eqref{2.Def.DirectIntegral} of Definition~\ref{Def.DirectIntegral},  this function is measurable. 
	Therefore the evaluation of $R$ at $\bar a$ is, as a uniform limit of measurable functions, measurable. 

The proof in case of function symbols is analogous. 
\end{proof}
	
	On $M$ the interpretations of function symbols and predicate symbols in~$\calL$ are defined in the natural way. The interpretation of a function symbol $F(\bar x)$ in $\calL$, for a tuple $\bar a$ in $M$ of appropriate sort, is the equivalence class $F^M(\bar a)$ of the function on $\Omega$ such that 
 \[
 \omega\mapsto F^{M_\omega}(\bar a_\omega).
 \]
 If $R(\bar x)$ is a relation symbol in $\calL$ and $\bar a$ in $M$ is of appropriate sort, then
 \[
 R^M(\bar a)=\int_\Omega R^{M_\omega}(\bar a_\omega)\, d\mu(\omega).
 \]

\begin{lemma} If $\calL$ is a continuous language and $M_\omega$, for $\omega\in \Omega$, is a measurable field of $\calL$-structures, then $\int^\oplus M_\omega\, d\mu(\omega)$ is an $\calL$-structure. \qed 
\end{lemma}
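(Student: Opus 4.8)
The plan is to verify the three things required for $M=\int^\oplus_\Omega M_\omega\,d\mu(\omega)$ to be an $\calL$-structure: that $(M,d)$ is a complete metric space of the prescribed bounded diameter on each sort, that every function symbol induces a well-defined interpretation on $M$ respecting its modulus of uniform continuity, and likewise for every predicate symbol together with the requirement that its values lie in the prescribed interval. Throughout I would lean on the previous lemma, which supplies the measurability of $\omega\mapsto F^{M_\omega}(\bar a)$ and $\omega\mapsto R^{M_\omega}(\bar a)$ for tuples $\bar a$ from $\cM$.

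First I would pin down the metric. Treating the metric as a distinguished binary predicate, the previous lemma gives that $\omega\mapsto d_\omega(a(\omega),b(\omega))$ is measurable for all $a,b\in\cM$; since each sort carries a uniformly bounded metric this function is also integrable, so $d^M(a,b)=\int_\Omega d_\omega(a(\omega),b(\omega))\,d\mu(\omega)$ is finite and, by integrating the pointwise triangle inequality and symmetry, a pseudometric on $\cM$. Passing to the quotient by $\{d^M=0\}$ yields the genuine metric $d$ of Definition~\ref{Def.DirectIntegral}. For completeness I would run the standard $L^1$-argument: given a $d$-Cauchy sequence, pass to a subsequence $(a^k)$ with $d(a^{k+1},a^k)<2^{-k}$; then $\sum_k d_\omega(a^{k+1}(\omega),a^k(\omega))<\infty$ for $\mu$-a.e.\ $\omega$ by monotone convergence, so $(a^k(\omega))_k$ is Cauchy in the complete space $M_\omega$ and converges to some $a(\omega)$ (set $a(\omega)=e_0(\omega)$ on the remaining null set). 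Then $a\in\cM$, because $\omega\mapsto d_\omega(a(\omega),e_n(\omega))=\lim_k d_\omega(a^k(\omega),e_n(\omega))$ is an a.e.\ limit of measurable functions, and Fatou's lemma gives $d(a^k,a)\le 2^{-k+1}\to 0$, so the original sequence converges.

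Next, for a function symbol $F$ and a tuple $\bar a$ from $\cM$, the previous lemma shows $F^M(\bar a)\in\cM$, so the fibrewise formula $\omega\mapsto F^{M_\omega}(\bar a_\omega)$ lands in the domain; it respects the equivalence relation, since $\bar a\sim\bar a'$ forces $a_j(\omega)=a'_j(\omega)$ for a.e.\ $\omega$ and each $j$, whence $F^{M_\omega}(\bar a_\omega)=F^{M_\omega}(\bar a'_\omega)$ a.e.\ and $d^M(F^M(\bar a),F^M(\bar a'))=0$. Predicate symbols are handled the same way: $R^M(\bar a)=\int_\Omega R^{M_\omega}(\bar a_\omega)\,d\mu$ is well-defined because the integrand is measurable (previous lemma) and takes values in the fixed bounded interval $I_R$ attached to $R$ by the language; since $\mu$ is a probability measure the average again lies in the convex set $I_R$, and a.e.\ equality of arguments gives the same value, so $R^M$ descends to equivalence classes with values in $I_R$.

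The one genuinely non-formal point, and the step I expect to be the main obstacle, is transferring the \emph{uniform} continuity moduli from the fibres to the integral metric, as $d^M$ is an $L^1$-type average rather than a supremum. Here I would combine the boundedness of the sorts with a Chebyshev estimate. Let $K$ bound the relevant metric (or the diameter of $I_R$). Given $\e>0$, let $\delta'=\Delta_F(\e/2)$ be the fibrewise modulus, uniform over all $\calL$-structures by the syntax. On the set where $d_\omega(\bar a_\omega,\bar b_\omega)<\delta'$ the fibre inequality gives $d_\omega(F^{M_\omega}(\bar a_\omega),F^{M_\omega}(\bar b_\omega))\le\e/2$, while on the complementary bad set $B$ this quantity is at most $K$, and $\mu(B)\le d^M(\bar a,\bar b)/\delta'$ by Chebyshev. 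Hence $d^M(F^M(\bar a),F^M(\bar b))\le \e/2+K\,d^M(\bar a,\bar b)/\delta'$, so $\delta=\e\delta'/(2K)$ yields $d^M(\bar a,\bar b)<\delta\Rightarrow d^M(F^M(\bar a),F^M(\bar b))\le\e$. The identical computation with $\Delta_R$ and the diameter of $I_R$ shows $R^M$ is uniformly continuous. This furnishes an explicit modulus for each interpretation, depending only on the fibrewise modulus and the bound $K$, and completes the verification that $M$ is an $\calL$-structure.
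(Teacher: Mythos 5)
Most of your verification is correct, and it fills in exactly the steps that the paper's own proof dismisses in one sentence as ``straightforward'': the $L^1$-completeness argument (fast subsequence, a.e.\ convergence in the complete fibres, Fatou), the membership $F^M(\bar a)\in\cM$ via the preceding measurability lemma, well-definedness of $F^M$ and $R^M$ on equivalence classes, and the fact that $R^M$ lands in the prescribed interval because $\mu$ is a probability measure. Up to that point you and the paper follow the same outline.

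The gap is in the step you yourself single out as the crux. In continuous logic as in \cite{BYBHU}, being an $\calL$-structure does not mean that each interpretation is uniformly continuous with \emph{some} modulus; it means that each interpretation obeys the modulus \emph{prescribed by the language}, and that stronger property is precisely what the paper asserts (``with the modulus of continuity as required by the syntax in $\calL$''). Your Chebyshev estimate only produces the degraded modulus $\e\mapsto\e\Delta_F(\e/2)/(2K)$ (and, with the max metric on tuples, it also loses a factor of the arity in the union bound), so it does not prove the lemma in this strict sense. Moreover, the loss is not an artifact of your estimate: the $L^1$-average metric genuinely fails to transport moduli of ``max'' type. For instance, let $F$ be a binary symbol required to be $1$-Lipschitz for the max metric, and take two fibres of measure $1/2$ in which $F$ is interpreted as the first, respectively the second, coordinate projection; if $a_1,b_1$ differ only in the first fibre (at distance $1$ there) and $a_2,b_2$ differ only in the second fibre, then $\max(d^M(a_1,b_1),d^M(a_2,b_2))=1/2$ while $d^M(F^M(a_1,a_2),F^M(b_1,b_2))=1$. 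The repair requires a different idea, namely a normalization of the moduli plus Jensen's inequality: every uniformly continuous map into a bounded space admits a \emph{concave} modulus, and by changing one coordinate at a time one may assume the syntax demands $d(F(\bar x),F(\bar y))\le\sum_j\omega_F(d(x_j,y_j))$ with $\omega_F$ concave; then
\[
d^M(F^M(\bar a),F^M(\bar b))\le\sum_j\int_\Omega\omega_F\big(d_\omega(a_j(\omega),b_j(\omega))\big)\,d\mu(\omega)\le\sum_j\omega_F\big(d^M(a_j,b_j)\big),
\]
so the direct integral satisfies exactly the fibrewise modulus, and the same computation handles predicate symbols. In the paper's intended application (tracial von Neumann algebras) all symbols carry coordinatewise Lipschitz moduli, for which this normalization is automatic, which is why the issue is invisible there; but for the lemma as stated, for an arbitrary continuous language, the Jensen argument (not the Chebyshev one) is the step that makes it true.
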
	

\begin{proof}
	It is straightforward to verify that $M$ is complete with respect to $d$ and that the interpretation of each function and predicate symbol in a direct integral is continuous with respect to $d$, with the modulus of continuity as required by the syntax in $\calL$. The conclusion follows. 
\end{proof}

\subsubsection*{A remark on randomizations} Keisler's randomizations of discrete structures \cite{keisler1999randomizing} as well as their metric analog (\cite{ben2009randomizations}, \cite[Definition~3.4]{yaacov2013theories}) are closely related to direct integrals of measurable fields of structures.
Unlike direct integrals, randomizations are presented in an expanded two-sorted language. 
Precise relation between randomizations (of both discrete structures and continuous structures whose theory has an additional property, that the space of quantifier-free $n$-types is a Bauer simplex for all $n\geq 1$) is discussed in detail in \cite[\S 21]{yaacov2024extremal}. 
  Note that the quantifier elimination results for randomizations (\cite[Theorem~3.6]{keisler1999randomizing}, \cite[Theorem~2.9]{ben2009randomizations}, and \cite[Corollary~3.33]{yaacov2013theories}) refer to the expanded language. It is not difficult to see that, for example, a nontrivial direct integral of a measurable field of II$_1$ factors does not admit quantifier elimination in the language of tracial von Neumann algebras. As a matter of fact, no type II$_1$ tracial von Neumann algebra admits quantifier elimination by  the main result of \cite{farah2023quantifier}. This result has been improved further in \cite{farah2023quantum} where it was shown that these theories are not even model complete. 

\subsection{Elementarity}\label{S.elementarity}
The model theory of tracial von Neumann algebras and \cstar-algebras were introduced in \cite{FaHaSh:Model2}. 
For every tuple $\bar x=\langle x_0,\dots, x_{n-1}\rangle$ of variables ($n\geq 0$, allowing for the empty tuple) one associates the algebra of formulas $\frFx$ with free variables included in $\bar x$.  
If $\varphi(\bar x)$ is a formula with free variables included in $\bar x$, $(N,\tau)$ is a tracial von Neumann algebra, then the interpretation $\varphi^N(\bar a)$ is defined for every tuple $\bar a$ in $N$ of the appropriate sort.  Sentences are formulas with no free variables. 

To every tracial von Neumann algebra $N$ one defines a seminorm $\|\cdot\|_N$ on $\frFx$ by 
\[
\|\varphi(\bar x)\|_N=\sup \varphi^{N^\cU}(\bar a). 
\]
Here $\bar a$ ranges over all $n$-tuples in the unit ball of $N$. 
(The standard definition of $\|\varphi(\bar x)\|$ takes supremum over all structures $M$ elementarily equivalent to~$N$ and all $n$-tuples in $M$ of the appropriate sort, but  the two seminorms coincide.)

\begin{definition}
Suppose that $M$ and $N$ are tracial von Neumann algebras. They are said to be \emph{elementarily equivalent}, $M\equiv N$, if every sentence $\varphi$ satisfies $\varphi^M=\varphi^N$. 

An \emph{elementary embedding} $\Psi\colon M\to N$ is an embedding such that $\varphi^M(\bar a)=\varphi^N(\Psi(\bar a))$ for every $\varphi(\bar x)$ and every $\bar a$ of the appropriate sort. 

If $M$ is a subalgebra of $N$ and the identity map is an elementary embedding, $M$ is called an \emph{elementary submodel} of $N$, in symbols $M\preceq N$. 
\end{definition}

The diagonal embedding of $M$ into its ultrapower $M^\cU$ is elementary (\L o\'s's Theorem). 
If $M\preceq N$ and $N\preceq P$, then $M\preceq P$. If $M\preceq P$, $N\preceq P$, and $M\subseteq N$, then $M\preceq N$. However, $M\preceq P$, $M\preceq N$, and $P\subseteq N$ does not in general imply $P\preceq N$. 

\section{Definability in probability measure algebras}\label{S.LMBA}
In the present section we start the proof of our Feferman--Vaught theorem stated and proven in \S\ref{S.FV}, by laying down some general  definability results following a request of one of the referees.  Let $\LMBA$ denote the language of probability measure algebras as in \cite{berenstein2023model}. In addition to Boolean operations, this language is equipped with a predicate for a probability measure and metric derived from it. Thus if $(\cB,\mu)$ is a measure algebra, the distance is given by $d_\mu(A,B)=\mu(A\Delta B)$, and the language includes (symbols for) the  Boolean operations. For definiteness, if $(\cB,\mu)$ is a measure algebra then on $\cB^n$ we consider the max distance, $d_\mu(\bar A, \bar B)=\max_{i<n} \mu(A_i\Delta B_i)$. 

The following simple fact is a warm up for Lemma~\ref{L.definability} used in proof of Theorem~\ref{T.FV-integral}. 

\begin{lemma}  \label{L.definable.0} Each of the following sets is definable in every measure algebra $(\cB,\mu)$. 
	\begin{enumerate}
		\item \label{1.definable} The set $\cX_1=\{(A_1,A_2)\mid A_1\subseteq A_2\}$. 
		\item \label{2.definable} The set $\cX_2=\{(A,B,C)\mid A\cap B=C\}$. 
		\item  \label{3.definable} For  $A\in\cB$, the set $\cX_A=\{B\in \cB\mid B\subseteq A\}$ is definable with $A$ as a parameter. 
	\end{enumerate}
\end{lemma}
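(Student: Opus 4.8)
The plan is to handle all three items by a single recipe: for each set $\cX$ I would exhibit an explicit nonnegative formula $\varphi$ of $\LMBA$ whose zero-set is exactly $\cX$, and then verify the standard criterion for definability of the zero-set of a definable predicate — namely that the distance to $\cX$ is uniformly dominated by $\varphi$ across all measure algebras. Concretely, for \eqref{1.definable} set $\varphi_1(A_1,A_2)=\mu(A_1\setminus A_2)$; this is an atomic formula, being the measure predicate applied to a Boolean term, and $\varphi_1(A_1,A_2)=0$ exactly when $A_1\subseteq A_2$, so $\cX_1=\{\varphi_1=0\}$. For \eqref{2.definable} I would use $\varphi_2(A,B,C)=\mu((A\cap B)\Delta C)=d_\mu(A\cap B,C)$, whose zero-set is $\cX_2$, and for \eqref{3.definable} the formula $\varphi_A(B)=\mu(B\setminus A)$ with the parameter $A$, whose zero-set is $\cX_A$.

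Next I would produce, for each tuple, a canonical ``repair'' lying inside the set and estimate its distance in the max metric. In \eqref{1.definable} the pair $(A_1\cap A_2,A_2)$ belongs to $\cX_1$ and, since only the first coordinate is altered, its distance from $(A_1,A_2)$ equals $\mu(A_1\Delta(A_1\cap A_2))=\mu(A_1\setminus A_2)=\varphi_1(A_1,A_2)$; hence $\dist((A_1,A_2),\cX_1)\le\varphi_1(A_1,A_2)$ in every measure algebra. The same one-coordinate computation gives $(A,B,A\cap B)\in\cX_2$ with $\dist((A,B,C),\cX_2)\le\varphi_2(A,B,C)$, and $B\cap A\in\cX_A$ with $\dist(B,\cX_A)\le\varphi_A(B)$.

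Finally, since each $\varphi$ is a (parameter-)formula that vanishes precisely on the corresponding set and dominates the distance to it uniformly — so that $\varphi(\bar A)<\e$ forces $\dist(\bar A,\cX)<\e$ — the standard criterion for definability of zero-sets in continuous logic (see \cite{BYBHU}) shows that $\dist(\cdot,\cX)$ is a definable predicate, and hence that $\cX$ is definable, with $A$ as a parameter in case \eqref{3.definable}.

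I expect the only genuine point to check is the uniform distance estimate, and here there is one subtlety worth flagging: in an algebra with atoms one cannot in general express $\dist(\cdot,\cX)$ in closed form as a single formula, because the optimal repair on the region $A_1\setminus A_2$ may have to be split between the two coordinates and the atomic structure obstructs an even split. For this reason the argument must rest on the one-sided bound $\dist\le\varphi$ together with the zero-set criterion, rather than on an exact evaluation of the distance; fortunately the one-sided bound is all the criterion requires, and it holds verbatim in every measure algebra.
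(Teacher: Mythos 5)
Your proposal is correct and follows essentially the same route as the paper's proof: exhibit the formulas $\mu(X_1\setminus X_2)$, $\mu((X_1\cap X_2)\Delta X_3)$, and $\mu(B\setminus A)$, check that their zero sets are the sets in question, and bound the distance to each set by the formula's value via the one-coordinate repairs $(A_1\cap A_2,A_2)$, $(A,B,A\cap B)$, and $B\cap A$. The only (harmless) divergences are that the paper deduces \eqref{3.definable} directly from \eqref{1.definable} rather than repeating the argument, and your closing remark about atoms obstructing an exact closed form for the distance is a correct observation that the paper does not make, but neither proof needs it since the one-sided bound suffices for the zero-set criterion.
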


\begin{proof} In each of the instances, we need a formula that bounds the distance of an element (or a tuple of the appropriate sort) to the set in question (see \cite[\S 9]{BYBHU} and more specifically \cite[Definition 9.16]{BYBHU}).
	
	\eqref{1.definable} Let $\varphi(X_1,X_2)=\mu(X_1\setminus X_2)$. Clearly $\cX_1$ is the zero set of $\varphi$. 
	Also,  for every $\bar A\in \cB^2$  the pair $\bar B=(A_1\cap A_2, A_2)$  is in $\cX_1$ and $d_\mu(\bar A, \bar B)=\mu(A_1\Delta (A_1\cap A_2))=\varphi(A_1,A_2)$. 
	
	\eqref{2.definable} Let $\psi(X_1,X_2,X_3)=\mu((X_1\cap X_2)\Delta X_3)$. Clearly $\cX_2$ is the zero set of $\psi$ is $\cX_2$. Also, for every $\bar A\in \cB^3$ the triple $\bar B=(A_1,A_2, A_1\cap A_2)$ is in $\cX_2$ and $d_\mu(\bar A, \bar B)=\psi(\bar A)$, as required.  
	
	\eqref{3.definable} follows from \eqref{1.definable}. 
\end{proof}

Lemma~\ref{L.definability} and Lemma~\ref{L.definability+} below would be consequences of Lemma~\ref{L.definable.0} if it only were the case that an intersection of definable sets is definable. (Counterexamples can be found in as of yet unpublished papers  \cite{benyaacov2021continuous}  and \cite{farah3024tracial}.) The notation in these two lemmas is chosen to comply with the natural notation in the proof of Theorem~\ref{T.FV-integral} at the point when Lemma~\ref{L.definability+} is being invoked.

\begin{lemma}\label{L.definability} Suppose that $\ell\geq 1$ and $\bar U=(U_j)_{j<\ell}$ is a tuple in a measure algebra $(\cB,\mu)$ such that $U_0\geq U_1\geq \dots \geq U_{\ell-1}$. Then the set
	\[
	\cX[\bar U]=\{\bar Y\in \cB^\ell\mid Y_j\leq U_j\cap \bigcap_{i<j} Y_i \text { for }j<\ell\}
	\]
	is a definable set with parameter $\bar U$. 
\end{lemma}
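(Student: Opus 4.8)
The plan is to exhibit a single formula with parameters $\bar U$ whose zero set is exactly $\cX[\bar U]$ and which controls the distance to $\cX[\bar U]$, so that the standard criterion for definability of a zero set (as in \cite{BYBHU}, and exactly in the style of Lemma~\ref{L.definable.0}) applies. Concretely, I would set
\[
\varphi(\bar Y) = \max_{j<\ell}\, \mu\!\left(Y_j \setminus \big(U_j \cap \bigcap_{i<j} Y_i\big)\right),
\]
which is a formula of $\LMBA$ with the entries of $\bar U$ as parameters, since the inner term is built from Boolean operations and the measure predicate. Because $\mu(A\setminus B)=0$ if and only if $A\leq B$ in $(\cB,\mu)$, the condition $\varphi(\bar Y)=0$ is equivalent to $Y_j\leq U_j\cap\bigcap_{i<j}Y_i$ for every $j<\ell$; hence the zero set of $\varphi$ is precisely $\cX[\bar U]$ (note that for $j=0$ the empty intersection is the top element, so the constraint reads $Y_0\leq U_0$).

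It then remains to bound $\dist(\bar Y,\cX[\bar U])$ by $\varphi(\bar Y)$, and for this I would define an explicit truncation
\[
Z_j = Y_j\cap U_j\cap\bigcap_{i<j}Y_i
\]
and check that $\bar Z\in\cX[\bar U]$. Trivially $Z_j\leq U_j$, so the only point that needs care is $Z_j\leq Z_i$ for $i<j$, and this is exactly where the monotonicity hypothesis $U_0\geq\dots\geq U_{\ell-1}$ is used. Indeed $Z_j\leq Y_i$ and $Z_j\leq Y_k$ for all $k<i<j$, while $Z_j\leq U_j\leq U_i$ precisely because $i<j$ and the $U$'s are decreasing; combining these gives $Z_j\leq Y_i\cap U_i\cap\bigcap_{k<i}Y_k=Z_i$, so $\bar Z\in\cX[\bar U]$.

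Finally, since $Z_j\leq Y_j$ we have $Y_j\Delta Z_j=Y_j\setminus Z_j$ and $\mu(Y_j\setminus Z_j)=\mu(Y_j\setminus(U_j\cap\bigcap_{i<j}Y_i))$, which is exactly the $j$-th term of $\varphi$. With the max distance on $\cB^\ell$ this yields $d_\mu(\bar Y,\bar Z)=\varphi(\bar Y)$, whence $\dist(\bar Y,\cX[\bar U])\leq\varphi(\bar Y)$. Together with the zero-set computation this shows that $\varphi(\bar Y)<\varepsilon$ forces $\dist(\bar Y,\cX[\bar U])<\varepsilon$, so by the criterion for definability of zero sets the set $\cX[\bar U]$ is definable. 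I expect the main obstacle to be purely the verification that the naive truncation lands back in $\cX[\bar U]$: without the assumption that the $U_j$ are decreasing one is forced to project recursively, replacing $Y_i$ by $Z_i$ in the definition of $Z_j$, which still works but only yields an estimate of the form $\dist\leq\ell\cdot\varphi$ and loses the clean equality above.
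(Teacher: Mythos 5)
Your proof is correct and takes essentially the same approach as the paper: the same truncation $Z_j=Y_j\cap U_j\cap\bigcap_{i<j}Y_i$, the same use of the monotonicity $U_0\geq\dots\geq U_{\ell-1}$ to check that the truncation lands in $\cX[\bar U]$, and a distance-bounding formula in the style of Lemma~\ref{L.definable.0}. If anything, your formula is marginally tidier: it includes the $j=0$ term $\mu(Y_0\setminus U_0)$ (the paper's $\varphi_{\bar U}$ starts its max at $m=1$, omitting the constraint $Y_0\leq U_0$), and by measuring $Y_j\setminus\bigl(U_j\cap\bigcap_{i<j}Y_i\bigr)$ directly rather than bounding it by a sum of two terms you obtain the exact equality $d_\mu(\bar Y,\bar Z)=\varphi(\bar Y)$.
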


\begin{proof}  As in Lemma~\ref{L.definable.0}, it suffices to find a formula $\varphi_{\bar U}$ such that its zero set is $\cX[\bar U]$ and for every $\bar X=(X_i: i< \ell )$ in $\cB^{\ell}$ the distance from $\bar X$ to $\cX[\bar U]$ is at most $\varphi_{\bar U}(\bar X)$.
	For an $\ell$-tuple $\bar U$ that satisfies $U_0\geq U_1\geq \dots \geq U_{\ell-1}$ let 
	\[
\textstyle	\varphi_{\bar U}(\bar X)=\max_{1\leq m<\ell} (\mu(X_m\setminus \bigcap_{j<m} X_j)+\mu(X_m\setminus U_m)). 
	\]
	Clearly $\cX[\bar U]$ is the zero set of $\varphi_{\bar U}$. Fix an $\ell$-tuple $\bar X$ in $\cB^\ell$. 
Let  
\[
Y_m=\bigcap_{j\leq m} X_j\cap U_m\text{ for }1\leq m<\ell.
\] 
Then $Y_m\subseteq \bigcap_{j< m} Y_j \cap U_m$ for all $1\leq m<\ell$, hence $\bar Y\in \cX[\bar U]$. 

In order to estimate $d_\mu(\bar X,\bar Y)$, note that since $Y_m\Delta X_m\subseteq (X_m\setminus \bigcap_{j<m} X_j)\cup (X_m\setminus U_m)$, we also have $d_\mu(X_m,Y_m)\leq \varphi_{\bar U}(\bar X)$, as required. 
\end{proof}

A little bit of natural (albeit slightly cumbersome) notation will be helpful. Suppose $\bbF$ is a finite set, $\ell(\zeta)\geq 1$ for $\zeta\in \bbF$, and we have 
\[
\bar U=(U^\zeta_{j})_{\zeta\in \bbF, j<\ell(\zeta)}.
\] 
Then we write $\bar U^\zeta=(U^\zeta_j)_{j<\ell(\zeta)}$ for $\zeta\in \bbF$. 

As pointed out above, an intersection of definable sets is not necessarily definable. However, in the following lemma we are dealing with a product of definable sets, not an intersection. 

\begin{lemma} \label{L.definability+} Suppose that $\bbF$ is a finite set and $\bar U^\zeta=(U^\zeta_j)_{j<\ell(\zeta)}$, for $\zeta\in \bbF$ and $\ell(\zeta)\geq 1$, is a tuple in a measure algebra $(\cB,\mu)$ such that $U^\zeta_0\geq U^\zeta_1\geq \dots \geq U^\zeta_{\ell(\zeta)-1}$. Then the set 
		\[
		\cY[\bar U]=\{Y^\zeta_j\mid \zeta\in \bbF, j<\ell(\zeta), \text{ and }(Y^\zeta_j)_{j<\ell(\zeta)}\in \cX[\bar U^\zeta]\}
		\]
		 is definable with parameter $\bar U=(\bar U^\zeta)_{\zeta\in \bbF}$. 
\end{lemma} 

\begin{proof} With $ \varphi_{\bar U}$ as in the proof of Lemma \ref{L.definability},  let 
	$
	\psi_{\bar U}(\bar X)=\max_{\zeta \in \bbF} \varphi_{\bar U^\zeta}(\bar X^\zeta)$.
	 	As there is no interaction between $\bar Y^\zeta$ for different $\zeta\in \bbF$,  and as 
	$\cY[\bar U]$ is equal to $\{\bar Y\mid \bar Y^\zeta\in \cX[\bar U^\zeta]$ for all $\zeta\in \bbF\}$, $\psi_{\bar U}$ witnesses that $\cY[\bar U]$ is definable, as required. 
\end{proof}
\section{The Feferman--Vaught type theorem for direct integrals} 
\label{S.FV}

Throughout this section we fix an arbitrary metric language $\calL$ and let $\LMBA$ be the language of probability measure algebras studied in \S\ref{S.LMBA}.

\begin{definition}
An $\LMBA$-formula $G(\bar X)$ in $m$ variables $\bar X=\langle X_1,\dots X_{m}\rangle$ is  \emph{coordinatewise increasing} if  for every measure algebra $(\cB,\mu)$ and every pair of $m$-tuples $\bar A=\langle A_i\rangle$ and $\bar A’=\langle A_i’\rangle$ in it, if  $A_i\leq A_i'$ for all $i\leq m$ then   $G^{(\cB,\mu)}(\bar A)\leq G^{(\cB,\mu)}(\bar A')$.    
\end{definition}

Definition~\ref{Def.1/k.determined} and Theorem~\ref{T.FV-integral} are stated for 
$\calL$-formulas whose ranges are included in $[0,1]$. 
Since the range of every $\calL$-formula $\varphi(\bar x)$ is a bounded interval,  the range of $r(\varphi(\bar x)-t)$ is $[0,1]$ for appropriately chosen real numbers~$r$ and $t$, and    this assumption will not result in loss of applicability of the theorem.   
In particular, the conclusion of Theorem~\ref{T.FV-integral} holds for tracial von Neumann algebras. 

	Given a 
 probability space $(\Omega,\cB,\mu)$ and a measurable field $M_\omega$, for $\omega\in \Omega$, of   $\calL$-structures, for an $\calL$-formula $\varphi\in\frFx$, $\bar a$ of the appropriate sort, and $t\in [0,1]$ we define
\begin{equation}\label{eq.Zeta}
		Z^{\zeta}_{t}[\bar a]=\{\omega\in \Omega: \zeta(\bar a_\omega)^{M_\omega}> t\}.
	\end{equation}
%Since the range of every formula is assumed to be $[0,1]$, we have  $Z^\zeta_{-1}[\bar a]=\Omega$ and $Z^\zeta_1[\bar a]=\emptyset$. 

\begin{definition} \label{Def.1/k.determined}
	An $\calL$-formula $\varphi(\bar x)$ whose range is included in $[0,1]$ is \emph{determined in direct integrals of $\calL$-structures}, 
	if the following objects exist. 
	\begin{enumerate}[(D.1), widest=D.44.]
		\item \label{I.D.1} A finite set $\bbF[\varphi]$ of $\calL$-formulas whose free variables are included in the free variables of $\varphi(\bar x)$ and whose ranges are included in $[0,1]$. 
		\item \label{I.D.2} For every $k\geq 2$, an integer $l(k,\varphi,\zeta)\geq 1$ and a coordinatewise increasing $\LMBA$-formula $G_{\varphi,k}(\bar X)$ with $\sum_{\zeta\in \bbF[\varphi]}l(k,\varphi,\zeta) $ many variables $X^\zeta_{i/l(k,\varphi,\zeta)}$, for $\zeta\in \bbF[\varphi]$ and $i<l(k,\varphi,\zeta)$. 
			\pushcounter
	\end{enumerate}
These objects are required to be such that  for every 
 probability space $(\Omega,\cB,\mu)$ and a measurable field $M_\omega$, for $\omega\in \Omega$, of   $\calL$-structures, and $\bar a$ of the appropriate sort the following hold. (Writing $M=\int^\oplus_\Omega M_\omega\, d\mu(\omega)$.) 
	\begin{enumerate}[(D.1), widest=D.44.]
		\popcounter
		\item\label{I.theta.l.1} $\varphi(\bar a)^M> t/k$ implies  
		\[
		 G_{\varphi,k}(Z^{\zeta}_{i/l(k,\varphi,\zeta)}[\bar a]; i<l(k,\varphi,\zeta), \zeta\in \bbF[\varphi])>(t-1)/k. 
		\]
		\item\label{I.theta.l.2}  $ G_{\varphi,k}(Z^{\zeta}_{i/l(k,\varphi,\zeta)}[\bar a]; i<l(k,\varphi,\zeta), \zeta\in \bbF[\varphi])>t/k$
		implies 
		\[
		\varphi(\bar a)^M> (t-1)/k. 
		\]
	\end{enumerate}
Similarly, if \ref{I.D.1} holds and \ref{I.D.2}—\ref{I.theta.l.2}  hold for a specific value of $k$, we say that $\varphi$ is \emph{$k$-determined}. 
\end{definition}
In particular, Definition~\ref{Def.1/k.determined} asserts that  the value of $\varphi(\bar a)$  is determined up to $2/k$  by the value of $G_{\varphi,k}$, which is in turn determined by the distribution of the evaluations of formulas $\zeta$ in the finite set $\bbF[\varphi]$, up to (roughly) $1/l(k,\varphi,\zeta)$ in the measurable field~$M_\omega$.

On the set of all $\calL$-formulas consider the natural uniform metric, 
\[
d(\varphi(\bar x),\psi(\bar x))=\sup|\varphi^M(\bar a)-\psi^M(\bar a)|,
\] 
where the supremum is taken over all $\calL$-structures $M$ and all tuples $\bar a$ of the appropriate sort in $M$.

\begin{theorem} \label{T.FV-integral} For every metric language $\calL$ the set of all determined formulas is dense in the set of all $\calL$-formulas. 
	\end{theorem}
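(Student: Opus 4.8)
The plan is to prove, by induction on the syntactic complexity of $\calL$-formulas, that every formula can be uniformly approximated by formulas that are determined in the sense of Definition~\ref{Def.1/k.determined}; since in continuous logic every $\calL$-formula is a uniform limit of formulas built from atomic ones by finitely many connectives and the quantifiers $\sup_y,\inf_y$, this gives density of the determined formulas and hence Theorem~\ref{T.FV-integral}. The guiding semantic fact is that in $M=\int^\oplus_\Omega M_\omega\,d\mu(\omega)$ predicate symbols are interpreted fiberwise and then integrated, so that the layer-cake identity $\int_\Omega g\,d\mu=\int_0^1\mu\{g>t\}\,dt$ recovers the integral of a $[0,1]$-valued fiber function, up to error $1/l$, from the Riemann sum $\frac1l\sum_{i<l}\mu\big(Z^{\cdot}_{i/l}[\bar a]\big)$ of the measures of its level sets \eqref{eq.Zeta}. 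For the base case of an atomic formula $\varphi$ I would take $\bbF[\varphi]=\{\varphi\}$ and $G_{\varphi,k}(\bar X)=\frac1l\sum_{i<l}\mu(X_{i/l})$ with $l$ large relative to $k$; this is coordinatewise increasing because $\mu$ is monotone, and the layer-cake estimate gives \ref{I.theta.l.1} and \ref{I.theta.l.2}.

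The connective step requires care because the value of a connective in the direct integral is $f$ applied \emph{after} integration, $f(\varphi_1,\dots,\varphi_n)^M(\bar a)=f\big(\varphi_1^M(\bar a),\dots,\varphi_n^M(\bar a)\big)$, so it is a function of the means alone; and $f(\text{mean})$ need not be monotone in the level sets, clashing with the coordinatewise-increasing requirement on $G$. I would resolve this by first noting that the class of determined formulas is closed under $x\mapsto 1-x$: if $\varphi$ is determined then, using $(1-\varphi)^M=1-\varphi^M$ and the approximate identity $Z^{1-\zeta}_{j/l}\approx(Z^{\zeta}_{\cdot})^{\mathrm c}$, the formula $1-G_{\varphi,k}(\bar X^{\mathrm c})$ (which is coordinatewise increasing whenever $G_{\varphi,k}$ is) serves as $G_{1-\varphi,k}$, with $\bbF[1-\varphi]=\{1-\zeta:\zeta\in\bbF[\varphi]\}$. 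Granted this, I would approximate $f$ uniformly by a polynomial $P$ (the only place density, rather than exact determinacy, is forced), split it by the signs of its coefficients as $P=P_+-P_-$ with $P_\pm$ coordinatewise nondecreasing on $[0,1]^n$, and set $G_{f(\bar\varphi),k}=P_+\big(g_j\big)-P_-\big(1-\tilde g_j\big)$ truncated to $[0,1]$, where $g_j$ is the increasing estimate of $\varphi_j^M$ from the data for $\varphi_j$ and $\tilde g_j$ the increasing estimate of $(1-\varphi_j)^M$ from the data for $1-\varphi_j$. Because increasing a level set of $\varphi_j$ raises $g_j$ while increasing a level set of $1-\varphi_j$ raises $\tilde g_j$, the two families move oppositely as the underlying distribution shifts, which is exactly what lets the monotone $G$ track the non-monotone $f$.

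For a quantifier $\psi=\sup_y\varphi(y,\bar x)$ (the case of $\inf$ reducing via $x\mapsto 1-x$), the value $\psi^M(\bar a)=\sup_b\varphi^M(b,\bar a)$ is a supremum over measurable sections $b=(b_\omega)$. Since the fibers can be chosen independently over $\omega$ and $G_{\varphi,k}$ is coordinatewise increasing, a measurable-selection argument shows this supremum is governed fiberwise by which threshold-profiles of the formulas in $\bbF[\varphi]$ are simultaneously realizable by some $b$ at a given $\omega$. I would encode these joint-realizability conditions by the finitely many $\sup_y$-formulas $\sup_y\min_\zeta(\zeta(y,\bar x)\dminus j_\zeta/l)$, one for each assignment of thresholds in $\frac1l\bbZ\cap[0,1]$ to the $\zeta\in\bbF[\varphi]$, and let $\bbF[\psi]$ consist of these. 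Their level sets form decreasing families $U^\zeta_0\ge U^\zeta_1\ge\dots$, the admissible tuples of achieved level sets $\bar Y$ are precisely those of Lemma~\ref{L.definability}, combined across $\zeta$ as the product set of Lemma~\ref{L.definability+}, and I would define
\[
G_{\psi,k}(\bar U)=\sup\{\,G_{\varphi,k}(\bar Y):\bar Y\in\cY[\bar U]\,\}.
\]
This is an $\LMBA$-formula because the supremum of a definable coordinatewise-increasing predicate over the definable set $\cY[\bar U]$ is again definable (Lemmas~\ref{L.definability}, \ref{L.definability+}), it is coordinatewise increasing since the $U^\zeta_j$ enter only as upper bounds, and it satisfies \ref{I.theta.l.1}, \ref{I.theta.l.2} once $l$ is large relative to $k$.

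I expect the quantifier step to be the crux. One must (a) justify interchanging the global supremum over sections with a fiberwise optimization, a measurable-selection statement, and argue that the realizable threshold-profiles are captured by \emph{finitely many} existential $\calL$-formulas so that $\bbF[\psi]$ stays finite, and (b) verify that the measure-algebra optimization defining $G_{\psi,k}$ is genuinely $\LMBA$-definable and coordinatewise increasing, which is precisely the role of Lemmas~\ref{L.definability} and \ref{L.definability+}. Matching the discretization errors from the Riemann sums and the threshold roundings against the two-sided $2/k$ tolerance in \ref{I.theta.l.1}--\ref{I.theta.l.2} is the routine-but-delicate bookkeeping that closes the induction.
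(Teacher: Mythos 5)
Your proposal is correct, and on the two points where the theorem is genuinely hard it coincides with the paper's proof: the atomic case is handled by the same layer-cake/Riemann-sum estimate, and your quantifier step is exactly the paper's --- finitely many threshold-profile formulas $\sup_y\min_{\zeta}(\zeta(\bar x,y)-\alpha(\zeta))$ (the paper's $\xi_\alpha$), the definable family $\cY[\bar Z]$ supplied by Lemmas~\ref{L.definability} and~\ref{L.definability+}, the definition $G_{\varphi,k}(\bar Z)=\sup_{\bar Y\in\cY[\bar Z]}G_{\psi,k}(\bar Y)$, and a measurable selection built from the dense sections $e_n$ to verify \ref{I.theta.l.2}. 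The one genuine divergence is the connective step. The paper invokes \cite[Proposition~6.6]{BYBHU} so that only the connectives $\frac12 x$ and $x\dminus y$ need to be treated, and it tames the non-monotonicity of $\dminus$ in its second argument by adjoining the formulas $1-\zeta$ for $\zeta\in\bbF[\eta]$ and using their reversed, complemented level sets, with a $3k$-to-$k$ loss of determinacy; you instead treat an arbitrary continuous connective $f$ by Stone--Weierstrass approximation $P\approx f$ and the sign splitting $P=P_+-P_-$ into coordinatewise nondecreasing polynomials, feeding $P_+$ the increasing estimates $g_j$ from the $\zeta$-level sets and $P_-$ the decreasing estimates $1-\tilde g_j$ from the $(1-\zeta)$-level sets --- the same complement trick, generalized. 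Your route shows directly that approximate determinacy survives all continuous connectives without appealing to a particular dense generating system; the paper's route is leaner, with far less error bookkeeping. Two small repairs your version needs: rescale $P$ affinely (e.g.\ replace it by $(P+\varepsilon)/(1+2\varepsilon)$, which preserves the sign splitting) so that $P(\bar\varphi)$ is genuinely $[0,1]$-valued, since Definition~\ref{Def.1/k.determined} applies only to such formulas; and your ``approximate identity'' $Z^{1-\zeta}_{j/\ell}\approx(Z^{\zeta}_{\cdot})^{\complement}$ is exact only after trading strict for non-strict level sets (the paper's $\widetilde Z$), which costs one grid step of $1/\ell$, absorbable in the error budget since the $G$'s are coordinatewise increasing and the level sets are nested.
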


%\section{Proofs of Theorem~\ref{T.FV+} and Theorem~\ref{T.FV-integral}}

\begin{proof}%[Proof of Theorem~\ref{T.FV-integral}]  
	The proof proceeds by induction on complexity of $\varphi$, simultaneously for all~$k\geq 2$. It will be clear from the proof that the set $\bbF[\varphi]$ does not depend on the choice of $k$. This is essentially the set of all subformulas of $\varphi$. 
	
	By \cite[Proposition 6.6]{BYBHU}, any set of $\calL$-formulas that includes the atomic formulas and is closed under multiplication by $1/2$, the operation $\varphi\dminus\psi=\max(0,\varphi-\psi)$, and quantifiers $\inf$ and $\sup$ is dense in the set of all $\calL$-formulas. It therefore suffices to prove that
	the sets of all $k$-determined formulas satisfy the following closure properties:
	\begin{enumerate}
		\item \label{1.FV} All atomic formulas are $k$-determined. 
		\item \label{2.FV} If $\varphi$ is $k$-determined, so is $\frac 12 \varphi$. 
		\item \label{3.FV} If $\varphi$ and $\psi$ are $3k$-determined, then  $\varphi\dminus \psi$ is $k$-determined. 
		\item \label{4.FV} If $\varphi$ is $k$-determined, so are $\sup_x\varphi$ and $\inf_x\varphi$
		for every variable $x$. 
	\end{enumerate}
	For readability of the ongoing proof, presented  by induction on the complexity of $\varphi$ simultaneously for all $k\geq 2$, we combine the recursive construction of $\bbF[\varphi]$, $l(k,\varphi,\zeta)$ for $\zeta\in \bbF[\varphi]$,  and $G_{\varphi,k}$ with a proof that 
	these objects have the required properties 
	for an arbitrary probability space $(\Omega,\cB,\mu)$, a measurable family of $\calL$-structures  $(M_\omega)_{\omega\in\Omega}$, and its direct integral 
	\[
	M=\int^\oplus_\Omega M_\omega\, d\mu(\omega)
	\]
	(Needless to say, the constructed objects will not depend on the choice of the measure space or the measurable family.)

	%Let $\theta^{\varphi,k}_l=\false$ if $l>k$ and $\theta^{\varphi,k}_l=\true$ if $l\leq 0$. This convention will be used tacitly. 
	
	\eqref{1.FV}
	Suppose that $\varphi(\bar x)$ is atomic or constant (i.e., a scalar) and that $\bar a\in M$ is of the appropriate sort. Let $\bbF[\varphi]=\{\varphi\}$,  $l(k, \varphi)=k$,  and define 
	$$
	G_{\varphi,k}(\bar X) = \frac 1k \sum_{i=1}^{k-1}\mu(X_i).
	$$
	It is clear that this formula is coordinatewise increasing. Then for $0\leq i<k$,  with
	\[
	Z^{\varphi}_{i/k}[\bar a]=\{\omega\in \Omega\mid \varphi(\bar a_\omega)^{M_\omega}> i/k\}
	\]
	 we have (for simplicity we will write $\bar Z^\varphi= ( Z^\varphi_0[\bar a], \dots,  Z^\varphi_{(k-1)/k}[\bar a])$). From the layer cake decomposition formula for the integral of a nonnegative function we obtain the following. 
	$$
	G_{\varphi,k}(\bar Z^\varphi)\leq
	\int_\Omega\varphi(\bar a_\omega)^{M_\omega}\, d\mu(\omega)
	\leq\frac 1k \sum_{i=0}^{k-1}\mu( Z^\varphi_{i/k}[\bar a]) \leq G_{\varphi,k}(\bar Z^\varphi)+\frac 1k.
	$$
	Thus the conditions \ref{I.theta.l.1} and \ref{I.theta.l.2} are clearly satisfied.
	
		\eqref{2.FV}
	Suppose that $\varphi(\bar x)=\frac 12 \psi(\bar x)$ and~$\psi$ is $k$-determined. 
	Let $\bbF[\varphi]=\bbF[\psi]$, $l(k,\varphi,\zeta)=l(k,\psi,\zeta)$ (we could have taken $l(k,\varphi,\zeta)$ to be $\lceil l(k,\psi,\zeta)/2\rceil$, but there is no reason to be frugal)   and define
	$$G_{\varphi,k} (\bar X)=\frac 12 G_{\psi,k}(\bar X).$$ 
	These objects satisfy the requirements by the definitions. 
	
	\eqref{3.FV} Suppose that $\varphi=\psi\dminus \eta$ and each one of $\psi$ and~$\eta$ is $3k$-determined. In order to prove that $\varphi$ is $k$-determined let 
	\[
	\bbF[\varphi]=\bbF[\psi]\cup \{1-\zeta: \zeta\in \bbF[\eta]\}.
	\]
 (If $\zeta\in\bbF[\eta]$ then its range is included in $[0,1]$, hence the range of $1-\zeta$ is also included in $[0,1]$.) 
Also, let   $l(k,\varphi,\zeta)=l(3k,\psi,\zeta)$ for $\zeta\in \bbF[\psi]$, and  $l(k,\varphi,1-\zeta)=l(3k,\eta,\zeta)$ for $\zeta\in \bbF[\eta]$. 
To define $G_{\varphi,k}$, we need an additional bit of notation. 

For $\zeta\in \bbF[\eta]$ and 
$s\in[0,1]$  let  (writing $\ell=l(3k, \eta, \zeta)$ for readability)
\[
\widetilde Z^{1-\zeta}_s=\{\omega\in \Omega :~ 1-\zeta(\bar a_\omega)^M \geq s\}.
\]
%Note that $\tilde Z^{1-\zeta}_s$ is the complement of $Z^{\zeta}_s$. 
For a tuple $\bar Z^{1-\zeta}=(Z^{1-\zeta}_{0}, \dots , Z^{1-\zeta}_{\frac{\ell-1}{\ell}})$ let 
\[
\overleftarrow Z^{1-\zeta} = (\widetilde Z^{1-\zeta}_{\frac{\ell-1}{\ell}}, \dots, \widetilde Z^{1-\zeta}_0). 
\] 

First, note that for every $0\leq i\leq \ell$ and $\zeta\in \bbF[\eta]$ we have  ($Y^\complement$ denotes the complement of $Y$, applied pointwise if $Y$ is a tuple)
\begin{align*}
    \Big(\widetilde{Z}_{(\ell-i)/\ell}^{1-\zeta}[\bar a]\Big)^\complement& = \big\{\omega \in \Omega : ~ \big(1-\zeta(\bar a_\omega)\big)^{M_\omega} \geq\frac{\ell-i}{\ell} \big\}^\complement\\
     &= \big\{\omega \in \Omega : ~ \zeta(\bar a_\omega)^{M_\omega} \leq \frac{i}{\ell} \big\}^\complement\\
     &= \{\omega\in \Omega :~ \zeta(\bar a_\omega)^{M_\omega} > \frac{i}{\ell}\} = Z_{i/\ell}^{\zeta}[\bar a].
\end{align*}
This shows that $(\overleftarrow {Z^{1-\zeta}})^\complement = \bar Z^\zeta$. Define
\footnote{For simplicity, we present $G_{\varphi,k}$ not as a formula in a tuple $\bar X$ abstract variables, but in terms of the intended values for these variables.  Note that, since $(\overleftarrow Z^{1-\zeta})^\complement$ consists of the complements of sets in  $\bar Z^{1-\zeta}$, $G_{\varphi,k}$ depends on the correct choice of variables, $\bar Z^\zeta$ for $\zeta\in \bbF[\varphi]$.}
 \begin{multline*}
 G_{\varphi,k}\big(\bar Z^{\xi}, \bar Z^{1-\zeta}; \xi\in \bbF[\psi], \zeta\in \bbF[\eta] \big)\\
 =  G_{\psi,3k}(\bar Z^{\xi}, \xi\in \bbF[\psi])\dminus  G_{\eta,3k}({(\overleftarrow Z^{1-\zeta}})^\complement, \zeta\in \bbF[\eta] ).
 \end{multline*}
Then $G_{\varphi,k}$ is coordinatewise increasing since the same is true for $G_{\psi,3k}$	and~$G_{\eta,3k}$.
	
        \begin{claim}\label{claim dminus} The formula
		$G_{\varphi,k}$ satisfies \ref{I.theta.l.1} and \ref{I.theta.l.2} of Definition~\ref{Def.1/k.determined}.
		\end{claim}
        \begin{proof}%[Proof of Claim \ref{claim dminus}] 
		Suppose $\varphi^{M}(\bar a)>t/k$, for $M=\int^\oplus_\Omega M_\omega \, d\mu(\omega)$ and $\bar a$ in~$M$ of the appropriate sort. There exists $m<3(k- t)$ such that  
		\begin{itemize}
			\item  $\psi(\bar a)^M>  (3t+m)/3k$, and 
			\item $\eta(\bar a)^M \leq (m+1)/3k$. 
		\end{itemize}
	By the induction hypothesis,
	\begin{itemize}
			\item[(IH1)]  $G_{\psi, 3k}(\bar Z^{\xi}; ~\xi \in\bbF[\psi])>  \frac{3t+m-1}{3k}$, and 
			\item[(IH2)] $G_{\eta, 3k}(\bar Z^{\zeta}; ~\zeta  \in\bbF[\eta]) \leq \frac{m+2}{3k}$. 
		\end{itemize}

By (IH1) and (IH2), we have
\begin{align*}
    G_{\varphi,k}\big(\bar Z^{\xi}, \bar Z^{1-\zeta}; &\xi\in \bbF[\psi], \zeta\in \bbF[\eta] \big)\\&= G_{\psi,3k}(\bar Z^{\xi};\xi\in \bbF[\psi])\dminus  G_{\eta,3k}((\overleftarrow{ Z^{1-\xi}})^\complement;\zeta\in\bbF[\eta]) \\
    & \geq G_{\psi,3k}(\bar Z^{\xi};\xi\in \bbF[\psi])\dminus  G_{\eta,3k}(\bar Z^\zeta;\zeta\in\bbF[\eta]) \\
    &> \frac{3t+m-1}{3k}  - \frac{m+2}{3k} = \frac{t-1}{k}.
\end{align*}
This completes the proof of \ref{I.theta.l.1}.

To prove \ref{I.theta.l.2}, suppose that $G_{\varphi,k}\big(\bar Z^{\xi}, \bar Z^{1-\zeta}; \xi\in \bbF[\psi], \zeta\in \bbF[\eta] \big)>t/k$. Then by the definition of  $G_{\varphi,k}$, for some $m<3(k-t)$ we have 
  \begin{itemize}
      \item $G_{\psi,3k}(\bar Z^{\xi};\xi\in \bbF[\psi])>\frac{3t+m}{3k}$, and 
      \item $G_{\eta,3k}(\bar Z^\zeta;\zeta\in\bbF[\eta])\leq \frac{m+1}{3k}$.
  \end{itemize}
  By the induction hypothesis this implies 
	\begin{itemize}
			\item[(IH3)]  $\psi(\bar a)^M>  \frac{3t+m-1}{3k}$, and  
			\item[(IH4)] $\eta(\bar a)^M \leq \frac{m+2}{3k}$. 
		\end{itemize}
The conditions (IH3) and (IH4) immediately imply that $\varphi(\bar a)^M > (t-1)/k$ and complete the proof. 
\end{proof}

	\eqref{4.FV}  Suppose that $\varphi(\bar x)=\sup_y \psi(\bar x, y)$ and $\psi$ is $k$-determined. Let 
\begin{align*}
	\calR[\zeta]&=\{i/l(k,\psi,\zeta)\mid i<l(k,\psi,\zeta)\},\quad \text{for }\zeta\in \bbF[\psi],\\
	\cC&=\{\alpha\mid \text{there is a nonempty  $\bbF\subseteq\bbF[\psi]$ such that $\alpha\in \textstyle\prod_{\zeta\in \bbF} \calR[\zeta]$}\}.
	\end{align*}
	One may think of $\alpha\in \cC$ as a function from $\bbF$ into $\bbQ\cap [0,1]$. The point of specifying $\alpha(\zeta)\in \calR[\zeta]$ is that, because each $\calR[\zeta]$ is finite, the set 
	$\cC$ is finite as well.
	
	For $\alpha\in \cC$ define the $\calL$-formula
	\begin{align}\label{eq.xi-alpha}
	\xi_{\alpha}(\bar x)&=\sup_y \min_{\zeta\in \dom(\alpha)} (\zeta(\bar x,y)-\alpha(\zeta)).
	\end{align}
	Then for every  $\alpha\in\cC$, and $\bar a$ in $M$  we have 
	\begin{align*}\label{Z-xi-alpha}
	Z^{\xi_\alpha}_{0}[\bar a] =& \{\omega\mid \xi_\alpha(\bar a_\omega)^{M_\omega}>  0\} \\ 
 = & \{\omega\mid \textstyle\sup_{y\in M_\omega}\textstyle\min_{\zeta\in \dom(\alpha)} ((\zeta(\bar a,y) - \alpha(\zeta)) >  0\} \\
\subseteq & \bigcap_{\zeta\in\dom(\alpha)}\{\omega\mid \textstyle\sup_{y\in M_\omega} \zeta(\bar a,y)  > \alpha(\zeta)\} \\
 = &  \bigcap_{\zeta\in\dom(\alpha)} Z^{\xi_\zeta}_ {\alpha(\zeta)}[\bar a]
	\end{align*}
	Let 
	\begin{align*}
	\bbF[\varphi]&=\{\xi_\alpha\mid \alpha\in \cC\},\\
	l(k,\xi_\alpha,\varphi)&=\max\{l(k,\zeta,\psi)\mid \zeta\in \dom(\alpha)\},\quad\text{for $k\geq 2$ and  }\alpha\in \cC. 
	\end{align*}
	For simplicity  of notation we will denote $Z^{\xi_\alpha}_{0}[\bar a] $ with $Z^{\xi_\alpha}_{0}$ and, more generally,   $Z^\eta_r[\bar a]$ with $Z^\eta_r$, whenever there is no ambiguity. It will also be helpful to introduce an abbreviation and write, for $\zeta\in \bbF[\psi]$, 
	\[
	\ell(\zeta)=l(k,\psi,\zeta). 
	\]

Prior to defining the $\LMBA$-formula $G_{\varphi,k}$, we note that  every variable occurring in $G_{\psi,k}$  is of the form~$Z^\zeta_{i/\ell(\zeta)}$ for some $\zeta\in \bbF[\psi]$ and $i< \ell(\zeta)$. Let 
	\begin{equation}\label{eq.barZ}
	\bar Z=(Z^{\xi_\alpha}_0 \mid \alpha\in \cC ).
	\end{equation}
	 % We introduce new $\LMBA$-variables $Y[\alpha]$, for $\alpha\in \cC_0$.  (We emphasize that this is just a finite set of variables indexed, for convenience,  by $\cC_0$.)
	
\begin{claim}\label{definability claim} With the notation from the previous paragraph, 
	 the set $\cY[\bar Z]$ of all $\bar Y=(Y^\zeta_i \mid i<\ell(\zeta), ~ \zeta\in \bbF[\psi])$ that satisfy conditions 
	\begin{enumerate}[($\theta$.1), widest=$\theta$.4.]
		\item \label{I.FV.1} $Y^\zeta_i\subseteq Z^{\xi_{\alpha}}_0$ for all $\alpha\in \cC$ such that $\zeta\in \dom(\alpha)$ and $\alpha(\zeta)=\frac{i}{\ell(\zeta)}$.  
		\item \label{I.FV.2} $Y^\zeta_i \subseteq  Y^\zeta_{i-1}$ if $i\geq 1$.  
		\pushcounter
	\end{enumerate} 
	is definable with parameters $\bar Z$ as in \eqref{eq.barZ}.
\end{claim}	
\begin{proof}%[Proof of Claim \ref{definability claim}]
	 For  $m<\ell(\zeta)$, let 
	\[
	U^\zeta_m =\bigcap\{Z^{\xi_\alpha}_0\mid \alpha\in \cC, \zeta\in \dom(\alpha), \alpha(\zeta)\leq \frac{m}{\ell(\zeta)}\}. 
	\]
	Then $U^\zeta_0\supseteq U^\zeta_1\supseteq \dots \supseteq U^\zeta_{\ell(\zeta)-1}$ for every $\zeta$. 
Let 		
\[
		\cX[\bar U^\zeta]=\{\bar Y^\zeta\in \cB^{\ell(\zeta)}\mid Y^\zeta_j\leq U^\zeta_j\cap \bigcap_{i<j} Y^\zeta_i\text { for }j<\ell(\zeta)\}. 
		\] 
	Then $\cY[\bar U]=\{Y^\zeta_j\mid \zeta\in \bbF[\psi], j<\ell(\zeta), \text{ and }(Y^\zeta_j)_{j<\ell(\zeta)}\in \cX[\bar U^\zeta]\}$, as considered in Lemma~\ref{L.definability+} is definable. This set is equal to $\cY[\bar Z]$ and it is definable with parameters $\bar Z$. 
\end{proof}
	
Therefore $G_{\varphi,k}$ as defined below is a formula (on the right-hand side, in $G_{\psi,k}$ the variable $Z^\zeta_{i/l(\zeta)}$	is replaced with $Y^\zeta_i$ for all $\zeta\in \bbF[\psi]$ and $i<\ell(\zeta)$):
 \begin{equation*}
  G_{\varphi,k}(\bar Z^{\xi}\colon  \xi\in \bbF[\varphi])
  =\sup_{\bar Y\in \cY[\bar Z]} G_{\psi,k}(Y^\zeta_i; \zeta\in\bbF[\psi], i<l(\zeta)). 
 \end{equation*}
It is clear that $G_{\varphi,k}$ is coordinatewise increasing since $G_{\psi,k}$ has this property and since the set $\cY[\bar Z]$ is also increasing  in $\bar Z$ (in the sense that $\bar Z\leq \bar Z’$ implies $\cY[\bar Z]\subseteq \cY[\bar Z’]$). 
It remains to prove that $G_{\varphi,k}$ satisfies the requirements of Definition~\ref{Def.1/k.determined}. 

To prove \ref{I.theta.l.1},
suppose $\varphi(\bar a)^M>t/k$, for $M=\int^\oplus_\Omega M_\omega \, d\mu(\omega)$ and $\bar a$ in $M$ of the appropriate sort. Pick $b\in M$ such that $\psi(\bar a, b)^M>t/k$. Then, by the induction hypothesis
\begin{equation}\label{Gpsi}
G_{\psi,k}\big( \bar Z^\zeta [\bar a, b]; \zeta \in \bbF[\psi]\big)>\frac{t-1}{\ell(k)}. 
\end{equation}
 For $\zeta\in \bbF[\psi]$ and $i<\ell(\zeta)$ let
\[
Y^\zeta_i =Z^{\zeta}_{i/l(\zeta)}[\bar a,b].   
\]
We claim that $\bar Y$ defined in this manner belongs to the set $\cY[\bar Z]$ as in Claim~\ref{definability claim}. 
Condition \ref{I.FV.2} is clearly satisfied and condition \ref{I.FV.1}  is  satisfied because for all $\zeta\in \bbF[\psi]$ and $i<l(\zeta)$, we have that
\begin{align*}
	Y^\zeta_i = &\{\omega\in \Omega\} \mid \zeta(\bar a_\omega, b_\omega)^{M_\omega}> i/\ell(\zeta) \}\\ \subseteq  & \bigcap_{\stackrel{\alpha\in \cC,  \zeta\in \dom(\alpha),}{\alpha(\zeta)\leq i/\ell(\zeta)}}\{ \omega\in \Omega \mid  \quad \sup_{y\in M_\omega} \min_{\zeta\in \dom(\alpha)} \eta(\bar a_\omega,y)^{M_\omega}- \alpha(\zeta) > 0\}\\
=&  \bigcap_{\stackrel{\alpha\in \cC,  \zeta\in \dom(\alpha),}{\alpha(\zeta)\leq i/\ell(\zeta)}} Z^{\xi_\alpha}_0.
\end{align*}
By  \eqref{Gpsi} we have
\[
G_{\varphi, k }\big(\bar Z^{\xi} ; \xi\in \bbF[\varphi] \big) \geq G_{\psi,k}(Y^\zeta_i; \zeta\in\bbF[\psi], i<\ell(\zeta))>\frac{t-1}{\ell(k)}.
\]
This completes the proof of \ref{I.theta.l.1}.

To prove \ref{I.theta.l.2},  assume $G_{\varphi, k }\big(\bar Z^{\xi_\zeta}[\bar a] ;  \zeta \in \bbF[\psi] \big)>t/k$. Then there are measurable  sets  (as before, $\bar a$ is suppressed for readability) $Y^\zeta_i$ for $\zeta\in \bbF[\psi]$ and $i<\ell(\zeta)$  satisfying \ref{I.FV.1} and \ref{I.FV.2} such that
\begin{equation}\label{G-Y}
 G_{\psi,k}(Y^\zeta_i; \zeta\in\bbF[\psi], i<\ell(\zeta)) >\frac{t}{k}.
\end{equation}

For each $\omega\in \Omega$ let 
\[
D_\omega = \{\zeta\in \bbF[\psi]: \omega\in Y^\zeta_0\}.
\]

Define $\alpha_\omega\in \cC$ with $\dom(\alpha_\omega)=D_\omega$ by
\begin{equation}\label{alpha_gamma}
  \alpha_\omega(\zeta) =\max\{i/\ell(\zeta) : \omega\in Y^\zeta_i\}.
\end{equation}
If $D_\omega\neq \emptyset$ then $\omega\in \bigcap_{\zeta\in D_\omega} Y^\zeta_{\alpha_\omega(\zeta)}$ and  \ref{I.FV.1}   implies $Y^\zeta_{\alpha_\omega(\zeta)}\subseteq Z^{\xi_{\alpha_\omega}}_0$ for every $\zeta\in \dom(\alpha_\omega)$, hence $\bigcap_{\zeta\in D_\omega} Y^\zeta_{\alpha_\omega(\zeta)}\subseteq Z^{\xi_{{\alpha}_\omega}}_0$ and 
		$\omega\in Z^{\xi_{{\alpha}_\omega}}_0$. %\marginpar{No, it does not! Because $Z^{\xi_{\alpha_\omega}}_{0}[\bar a]\subseteq  \bigcap_{\zeta\in D_\omega} Z^{\xi_\zeta}_ {\alpha_\omega(\zeta)}[\bar a]$}  
	Therefore, 
\[
\sup_y \min_{\zeta\in D_\omega} \zeta(\bar a_\omega, y)>\alpha_\omega(\zeta).
\]
Recall from Definition~\ref{Def.DirectIntegral} that all $M_\omega$, for $\omega\in \Omega$, are separable and that $e_n$, for $n\in \bbN$, enumerate a subset of $M$ such that $e_n(\omega)$, for $n\in \bbN$, form a dense subset of $M_\omega$ for every $\omega\in \Omega$. 
Thus, if $D_\omega\neq \emptyset$ there exists $n\in \bbN$ such that 
\[
\min_{\zeta\in D_\omega} \zeta^{M_\omega}(\bar a_\omega, e_{n}(\omega))>\alpha_\omega(\zeta).
\]
Let $n(\omega)$ be the minimal $n$ with this property. For each $n\in \bbN$, let
\[
\Omega_n=\{\omega\in \Omega\mid n(\omega)=n\}
\] 
and $\Omega_\infty= \{\omega\in \Omega\mid  D_\omega =\emptyset\}$. Note that $\Omega_\infty = \Omega\setminus \bigcup_{\zeta\in \bbF[\psi]} Y^\zeta_0$. Therefore, $\{\Omega_n \mid n\in \bbN\cup\{\infty\}\}$ is a partition of $\Omega$ into measurable sets. 
The function $b$ defined on $\Omega$ by
\[
b_\omega=\begin{cases} e_{n}(\omega) &\text{if } \omega\in \Omega_n\\ 
e_0(\omega) &\text{if } \omega\in \Omega_\infty
\end{cases}
\]
is a measurable field of elements and it therefore defines an element of $M$. By the choice of $n(\omega)$, if $D_\omega\neq \emptyset$, then
\begin{equation}\label{ineq-gamma}
\min_{\zeta\in D_\omega}\zeta(\bar a_\omega, b_\omega)^{M_\omega} > \alpha_\omega(\zeta).
\end{equation}

\begin{claim}\label{YsubZ}
We have     $Y^\zeta_i\subseteq Z^{\zeta}_{i/\ell(\zeta)}[\bar a, b]$ for all $\zeta\in\bbF[\psi]$ and $i<\ell(\zeta)$.
\end{claim}
\begin{proof}%[Proof of Claim \ref{YsubZ}]
Suppose $\omega\in Y^\zeta_i$.  Then, from \eqref{alpha_gamma} we have $i/ \ell(\zeta)\leq \alpha_\omega(\zeta)$. By the choice of $b$, 
\[
\zeta(\bar a_\omega, b_\omega)^{M_\omega} > \alpha_\omega(\zeta)\geq\frac{i}{\ell(\zeta)},
\]
which means, $\omega\in Z^{\zeta}_{i/\ell(\zeta)}[\bar a, b]$. 
\end{proof}

Since $G_{\psi,k}$ is a coordinatewise increasing formula,  Claim~\ref{YsubZ} and \eqref{G-Y} imply
\[
G_{\psi,k}\big( \bar Z^\zeta[\bar a, b]; \zeta \in \bbF[\psi]\big)>\frac{t}{k}. 
\]
The inductive hypothesis implies that $\psi(\bar a,b)^M>(t-1)/k$, and therefore   $\varphi(\bar a)^M>(t-1)/k$ as required. 

Since $\inf_y \psi(\bar x,y)=1-\sup_y (1-\psi(\bar x,y))$, the case when $\varphi(\bar x)=\inf_y \psi(\bar x,y)$ for some $\psi$ that satisfies the inductive assumption follows from the previous case.  This completes the proof by induction on complexity of $\varphi$. 
 \end{proof}

What makes Theorem \ref{T.FV-integral}  (or in general, Feferman-Vaught-type theorems) `effective' is that the objects in  \ref{I.D.1}—\ref{I.D.2} of  Definition~\ref{Def.1/k.determined} can be recursively obtained from only the syntax of $\varphi$, as the proof shows.  In order to state Corollary~\ref{FV-corollary 1} we need a definition. 

\begin{definition}[Measurable fields and direct integrals of metric structures] \label{Def.DirectIntegral.2}
	Suppose that $\calL$ is a continuous language, $(\Omega,\cB,\mu)$ is a separable probability measure space, $(N_\omega, d_\omega)$, for $\omega\in \Omega$, are separable $\calL$-structures and~$M_\omega$ is a substructure of $N_\omega$ for a set of $\omega$ of full measure. Assume that $e_n$, for $n\in \bbN$, is a sequence in $\prod_{\omega\in \Omega} N_\omega$ such that the following two conditions hold. 
	\begin{enumerate}
		\item For every $\omega$ the set $\{e_{2n}(\omega)\mid n\in \bbN\}$ is dense in $M_\omega$ and the set $\{e_{n}(\omega)\mid n\in \bbN\}$ is dense in $N_\omega$. 
		\item For every predicate $R(\bar x)$ in $\calL$ and every tuple $e_{2\bar n}=\langle e_{2n(0)}, \dots, e_{2n(j-1)}\rangle$ of the appropriate sort, the function $\omega\mapsto R^{M_\omega}(e_{2\bar n}(\omega))$ is measurable. 
		\item For every predicate $R(\bar x)$ in $\calL$ and every tuple $e_{\bar n}=\langle e_{n(0)}, \dots, e_{n(j-1)}\rangle$ of the appropriate sort, the function $\omega\mapsto R^{N_\omega}(e_{\bar n}(\omega))$ is measurable. 
	\end{enumerate}
As in Definition~\ref{Def.DirectIntegral},	the structures $N_\omega$, together with the functions $e_n$, form a \emph{measurable field} of $\calL$-structures and  the structures $M_\omega$ form a \emph{measurable subfield} of this measurable field. 
	\end{definition}

\begin{corollary}\label{FV-corollary 1}
    Suppose $(\Omega, \cB,\mu)$ is a separable measure space, and $M_\omega$ and $N_\omega$ are measurable fields of  structures of the same language, for all $\omega\in \Omega$.

    If $M_\omega\equiv N_\omega$ for almost all $\omega$, then  
    \[
    \int_\Omega^\oplus M_\omega \, d\mu(\omega)  \equiv \int_\Omega^\oplus N_\omega \, d\mu(\omega).
    \]
    If $M_\omega$, for $\omega\in \Omega$ is a measurable subfield of $N_\omega$, for $\omega\in \Omega$ and in addition $M_\omega\preceq N_\omega$ for almost all $\omega$, then 
\[
\int_\Omega^\oplus M_\omega \, d\mu(\omega)  \preceq \int_\Omega^\oplus N_\omega \, d\mu(\omega).
\]
\end{corollary}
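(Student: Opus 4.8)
The plan is to read Corollary~\ref{FV-corollary 1} off Theorem~\ref{T.FV-integral}. The content of that theorem is that, for a determined $\varphi$, the value $\varphi(\bar a)^M$ in a direct integral $M=\int^\oplus_\Omega M_\omega\,d\mu(\omega)$ is pinned down, up to an error of order $1/k$, by feeding the measure-algebra elements $Z^\zeta_{i/l(k,\varphi,\zeta)}[\bar a]$ (for $\zeta\in\bbF[\varphi]$ and $i<l(k,\varphi,\zeta)$, abbreviated $Z^\zeta_{i/l}[\bar a]$ below) into the single $\LMBA$-formula $G_{\varphi,k}$, and that the data $\bbF[\varphi]$, $l(k,\varphi,\zeta)$, and $G_{\varphi,k}$ depend only on the syntax of $\varphi$, not on the particular field. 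Since the determined formulas are dense and the uniform metric $d$ controls interpretations in every $\calL$-structure, in particular in the direct integrals $M$ and $N$, it suffices to prove both claims for a determined $\varphi$: a sentence for the elementary-equivalence statement, and an arbitrary formula together with a tuple $\bar a$ from $M$ for the elementary-embedding statement. The residual approximation error is then sent to $0$.

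The crux is to show that the two families of sets $Z^\zeta_{i/l}[\bar a]$, computed in the field $(M_\omega)$ and in the field $(N_\omega)$, coincide as elements of the common measure algebra $(\cB,\mu)$. Fix $\zeta\in\bbF[\varphi]$. For the first statement $\bar a$ is empty and $\zeta$ is a sentence, so $M_\omega\equiv N_\omega$ gives $\zeta^{M_\omega}=\zeta^{N_\omega}$ for almost every $\omega$. For the second statement the measurable-subfield setup of Definition~\ref{Def.DirectIntegral.2} guarantees that $M$ is a substructure of $N$ under the natural identification and that $\bar a_\omega\in M_\omega$ almost everywhere; hence $M_\omega\preceq N_\omega$ yields $\zeta(\bar a_\omega)^{M_\omega}=\zeta(\bar a_\omega)^{N_\omega}$ for almost every $\omega$. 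In either case, since $\bbF[\varphi]$ is finite, a single conull set witnesses $\zeta(\bar a_\omega)^{M_\omega}=\zeta(\bar a_\omega)^{N_\omega}$ simultaneously for all $\zeta\in\bbF[\varphi]$; on this set the threshold comparisons agree, so $Z^\zeta_{i/l}[\bar a]$ computed in the two fields differ by a null set and therefore represent the same element of $(\cB,\mu)$ for every $\zeta$ and $i$.

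It follows that $G_{\varphi,k}$ returns one and the same value $g$ whether it is evaluated on the $M$-data or on the $N$-data. Conditions \ref{I.theta.l.1} and \ref{I.theta.l.2} of Definition~\ref{Def.1/k.determined}, applied once to $(M_\omega)$ and once to $(N_\omega)$, then force both $\varphi(\bar a)^M$ and $\varphi(\bar a)^N$ to lie within $O(1/k)$ of the quantity determined by $g$, so that $|\varphi(\bar a)^M-\varphi(\bar a)^N|=O(1/k)$. Letting $k\to\infty$ gives $\varphi(\bar a)^M=\varphi(\bar a)^N$ for every determined $\varphi$, and the density of determined formulas upgrades this to all $\varphi$. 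Taking $\varphi$ to range over sentences proves $M\equiv N$; the version with an arbitrary tuple $\bar a$ from $M$ is exactly the assertion that the inclusion $M\subseteq N$ is elementary, that is, $M\preceq N$.

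I expect the genuine work to be not the almost-everywhere agreement of the $\zeta$-values, which is routine once $\bbF[\varphi]$ is known to be finite, but the bookkeeping that makes the comparison legitimate: verifying that the measurable-subfield conventions of Definition~\ref{Def.DirectIntegral.2} really do make $M$ a substructure of $N$ (so that $\bar a$ is simultaneously a tuple in $M$ and in $N$ and the relation $\preceq$ is meaningful), and confirming that each $\omega\mapsto\zeta(\bar a_\omega)^{M_\omega}$ is measurable, so that the sets $Z^\zeta_{i/l}[\bar a]$ are honest elements of $(\cB,\mu)$ in both fields. Once these points are in place, the argument is a direct transfer through the field-independent data $\bbF[\varphi]$, $l(k,\varphi,\zeta)$, and $G_{\varphi,k}$.
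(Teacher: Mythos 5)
Your proposal is correct and follows essentially the same route as the paper's proof: approximate by a determined formula via Theorem~\ref{T.FV-integral}, observe that fiberwise elementarity (equivalence, or elementarity of the inclusion on a conull set) makes the sets $Z^\zeta_r[\bar a]$ agree in the measure algebra for all $\zeta\in\bbF[\varphi]$, so that $G_{\varphi,k}$ takes the same value for both fields, and conclude $|\varphi(\bar a)^M-\varphi(\bar a)^N|=O(1/k)$ before letting $k\to\infty$. The bookkeeping points you flag (measurability of $\omega\mapsto\zeta(\bar a_\omega)^{M_\omega}$ and the substructure identification from Definition~\ref{Def.DirectIntegral.2}) are indeed the only hypotheses the paper relies on implicitly.
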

\begin{proof}
We prove the second part. Fix a formula $\varphi(\bar x)$ and $k\geq 2$.  By Theorem \ref{T.FV-integral} can be uniformly approximated by a formula that is determined. Therefore, without loss of generality we assume $\varphi(\bar x)$ is determined.   Let 
\[
M= \int_\Omega^\oplus M_\omega \, d\mu(\omega)  ~~\text{ and } ~~ N= \int_\Omega^\oplus N_\omega \, d\mu(\omega).
\]
 For  every $\bar a$ in $M$ of the appropriate sort and every formula $\zeta(\bar x)\in \bbF[\varphi]$, the set of $\omega$ such that 
$\zeta(\bar a_\omega)^{N_\omega}=\zeta(\bar a_\omega)^{M_\omega}$ has full measure. That is, the sets of the form
$Z^\zeta_r [\bar a]$, as in Definition \ref{Def.1/k.determined}, evaluated in structures $M$ and $N$ are the same.    
 Therefore $|\varphi(\bar a)^N- \varphi(\bar a)^M|<2/k$ and because $k$ was arbitrary it follows that $\varphi(\bar a)^N=\varphi(\bar a)^M$. Since $\varphi$ and $\bar a$ were arbitrary, $N_\omega\preceq M_\omega$. 

Proof of the first part is analogous. 
\end{proof}

As Ita\" i Ben Ya'acov pointed out, Corollary~\ref{FV-corollary 1} can be proven using quantifier elimination in atomless randomizations, as proven in \cite{ben2009randomizations}. This result applies only to atomless measure spaces but is in this case even slightly stronger as it shows that the direct integrals are elementarily equivalent even as randomization structures. 

A special case of Corollary \ref{FV-corollary 1} where all the fiber of the direct integrals are the same tracial von Neumann algebra leads to the following corollary.
\begin{corollary}\label{FV-corollary 2}
    Suppose $M$ and $N$ are elementarily equivalent tracial von Neumann algebras and $(\Omega, \cB,\mu)$ is a separable measure space. Then 
    \[
    M\bar\otimes L^\infty(\Omega, \mu)\equiv N\bar\otimes L^\infty(\Omega, \mu).
    \]
    If $M\preceq N$ then 
    \[
M\bar\otimes L^\infty(\Omega, \mu)\preceq N\bar\otimes L^\infty(\Omega, \mu).
\]\end{corollary}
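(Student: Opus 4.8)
The plan is to deduce this from Corollary~\ref{FV-corollary 1} by recognizing $M\bar\otimes L^\infty(\Omega,\mu)$ as the direct integral of the \emph{constant} field $\omega\mapsto M$.

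First I would record the standard identification from von Neumann algebra theory (as in \cite[\S IV.8]{Tak:TheoryI}): for a separable tracial von Neumann algebra $P$ and a separable probability space $(\Omega,\cB,\mu)$ there is a trace-preserving isomorphism
\[
P\bar\otimes L^\infty(\Omega,\mu)\cong \int_\Omega^\oplus P\, d\mu(\omega),
\]
where the right-hand side is the direct integral of the constant field $P_\omega=P$. To fit this into Definition~\ref{Def.DirectIntegral}, fix a countable $\|\cdot\|_2$-dense subset $\{a_n\mid n\in\bbN\}$ of $P$ and let $e_n$ be the constant function $\omega\mapsto a_n$; then $\{e_n(\omega)\mid n\in\bbN\}$ is dense in $P_\omega=P$ for every $\omega$, and for each predicate $R$ the map $\omega\mapsto R^{P_\omega}(e_{\bar n}(\omega))=R^P(\bar a_{\bar n})$ is constant, hence measurable. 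The trace on the direct integral is $a\mapsto\int_\Omega\tau_P(a(\omega))\,d\mu(\omega)$, which is exactly $\tau_P\otimes\int_\Omega\cdot\,d\mu$ under the identification, so the interpretations of all predicates agree on both sides.

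With this identification the first assertion is immediate: since $M\equiv N$, the constant fields satisfy $M_\omega=M\equiv N=N_\omega$ for every $\omega$, so the first part of Corollary~\ref{FV-corollary 1} gives $\int_\Omega^\oplus M\,d\mu\equiv\int_\Omega^\oplus N\,d\mu$, i.e.\ $M\bar\otimes L^\infty(\Omega,\mu)\equiv N\bar\otimes L^\infty(\Omega,\mu)$. For the second assertion, assume $M\preceq N$ and regard $M$ as a subalgebra of $N$. Choosing the dense sequence $e_n$ so that the even-indexed terms enumerate a dense subset of $M$ and all terms enumerate a dense subset of $N$ (possible since $M\subseteq N$), the constant field $M_\omega=M$ becomes a measurable subfield of $N_\omega=N$ in the sense of Definition~\ref{Def.DirectIntegral.2}, and $M_\omega\preceq N_\omega$ holds for every $\omega$. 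The second part of Corollary~\ref{FV-corollary 1} then yields $\int_\Omega^\oplus M\,d\mu\preceq\int_\Omega^\oplus N\,d\mu$, which is the desired $M\bar\otimes L^\infty(\Omega,\mu)\preceq N\bar\otimes L^\infty(\Omega,\mu)$.

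The only real content beyond Corollary~\ref{FV-corollary 1} is the identification of the tensor product with the constant direct integral, and I expect this to be the main (though routine) obstacle. The one subtlety is that Definition~\ref{Def.DirectIntegral} builds the underlying complete metric space using the $L^1$-average $\int_\Omega d_\omega\,d\mu$ of the fiber $2$-norms rather than the $2$-norm $(\int_\Omega d_\omega^2\,d\mu)^{1/2}$ carried by the tensor product. These two metrics are uniformly equivalent on each operator-norm ball, hence give the same Cauchy sequences and the same completion; and since the metric of a tracial von Neumann algebra is $\|x-y\|_2=\sqrt{\tau((x-y)^*(x-y))}$, it is recovered identically on both sides from the matching trace, so the two constructions produce the same metric structure in the language of tracial von Neumann algebras.
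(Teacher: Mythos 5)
Your proposal is correct and is essentially the paper's own argument: the paper obtains this corollary precisely by viewing $M\bar\otimes L^\infty(\Omega,\mu)$ as the direct integral of the constant field $\omega\mapsto M$ (a measurable subfield of the constant field $\omega\mapsto N$ in the second case) and invoking Corollary~\ref{FV-corollary 1}, exactly as you do. Your extra care about the discrepancy between the $L^1$-averaged metric of Definition~\ref{Def.DirectIntegral} and the trace-derived $\|\cdot\|_2$-metric of the tensor product addresses a point the paper leaves implicit, and your resolution is sound since the two metrics are uniformly equivalent on the unit ball and the matching traces make the tracial metric definable in both structures.
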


\section{Applications to tracial von Neumann algebras}\label{S.vNA}

In this section we prove Theorem~\ref{T.tensor}, after discussing a technical point. 

 \subsection{Two languages for tracial von Neumann algebra} Tracial von Neumann algebras are equipped with a distinguished tracial state $\tau$, usually suppressed for the simplicity of notation.\footnote{For simplicity, in this proof we allow tracial von Neumann algebras whose distinguished trace is not normalized.} In the literature tracial von Neumann algebras are usually considered with respect to the $\|\cdot\|_2$-norm, 
 \[
 \|a\|_2=\tau(a^*a)^{1/2},
 \]  
 but in \cite[\S 29]{yaacov2024extremal} they are for convenience considered with respect to the $\|\cdot\|_1$ norm, 
\[
\|a\|_1=\tau((a^*a)^{1/2}). 
\]
We will denote the corresponding languages $\LLTWO$ and  $\LLONE$, respectively. Since the syntax of continuous logic requires each function symbol to be equipped with a modulus of uniform continuity, the difference between these two languages is not only notational. 
By \cite[Lemma~29.1]{yaacov2024extremal}, on operator norm-bounded  balls  the $\|\cdot\|_1$-norm is compatible with the strong operator topology (and therefore equivalent to the $\|\cdot\|_2$-norm). 
We thus have two competing languages and two competing axiomatizations (the standard one and the one in \cite[Proposition~29.4]{yaacov2024extremal}) of tracial von Neumann algebras in continuous logic. In order to facilitate the ongoing discussion, for $j=1,2$ we will refer to the axiomatization (formulas,  definable predicates, etc.) using the $\|\cdot\|_j$-norm as 
$j$-axiomatization ($j$-formulas, $j$-definable predicates, etc.). 

\begin{lemma}\label{L.affine.12}
	\begin{enumerate}
		\item 	The tracial state is a j-definable predicate for $J=1,2$. 
		\item The norm $\|\cdot\|_2$ is a 1-definable predicate. 
		\item The norm $\|\cdot\|_1$ is a 2-definable affine predicate. 
		\item Every 2-definable predicates is a 1-definable predicate and vice versa.  
	\end{enumerate}
\end{lemma}

\begin{proof}	We prove that the tracial state is a 1-definable  predicate by exhibiting a concrete defining formula.
	If $a=a^*$ and $\|a\|\leq n$ (data visible from the sort of $a$) then $|a+n|=a+n$ and $\tau(a)=\|a+n\|_1-n$. Since $a$ can be written as $a=a_0+i a_1$ where $a_0:=(a+a^*)/2$ and $a_1:=(a-a^*)/2i$ are self-adjoint, we have that (still assuming $\|a\|\leq n$) 
	\[
	\tau(a)=\|a_0+n\|_1+i \|a_1+n\|-(1+i)n. 
	\]	
	Similarly, if $a=a^*$ and $\|a\|\leq n$ then $\tau(a)=\|a+n\|_2^2-n$, which by the above argument  shows that $\tau$ is 2-definable. 
	
	Since $\|a\|_2=\tau(a^*a)^{1/2}$, the 2-norm is a 1-definable predicate. 
	
	The remaining parts of the lemma follow. 
\end{proof}

\begin{corollary}\label{C.affine.12}
	Two tracial von Neumann algebras are 1-elementarily equivalent if and only if they are 2-elementarily equivalent. 

	A class of tracial von Neumann algebras is 1-axiomatizable (in continuous logic) if and only if it is 2-axiomatizable (in continuous logic). 
\qed 
\end{corollary}

\subsection{Proof of Theorem~\ref{T.tensor}}
En route to the proof of Theorem~\ref{T.tensor} we prove the following (very likely well-known, yet not completely trivial) result. By Corollary~\ref{C.affine.12}, we do not need to indicate whether `elementarily equivalent' refers to the 1-logic or to the 2-logic  as discussed in the previous subsection.

\begin{proposition}\label{P.typeI}
	Suppose that $M$ and $M_1$ are elementarily equivalent tracial von Neumann algebras and one of them is of type I.  Then the other one is also of type I. 
	
	If in addition both $M$ and $M_1$ have separable predual, then they are isomorphic. In other words, the theory is a complete isomorphism invariant for separable tracial von Neumann algebas of  type I.
\end{proposition}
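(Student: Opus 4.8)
The plan is to read off from $\Th(M)$ a complete isomorphism invariant of a separable type I tracial von Neumann algebra, using the classical central decomposition
\[
M\cong \bigoplus_{n\geq 1} M_n(\bbC)\botimes L^\infty(\Omega_n,\mu_n),
\]
where $(\Omega_n,\mu_n)$ is a (possibly trivial) separable measure space carrying the trace of the homogeneous type $\mathrm{I}_n$ summand, normalized so that $\mu_n$ agrees with $\tau$ restricted to the center of that summand. By the classification of separable measure spaces, each $(\Omega_n,\mu_n)$ is determined up to measure isomorphism by the multiset of its atom masses together with the total mass $c_n$ of its atomless part, and $M$ is determined up to trace preserving isomorphism by the family $\{(\Omega_n,\mu_n)\}_n$. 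So it suffices to show that these data are computed by $\Th(M)$; the point, for both assertions, is that the relevant quantities are values of $\calL$-sentences and hence invariant under $\equiv$.

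First I would record the definability facts used throughout: in a tracial von Neumann algebra the set of projections is definable, and for a projection $p$ the conditions ``$p$ is central'' ($\sup_y\|[p,y]\|_2=0$), ``$pMp$ is abelian'' ($\sup_{y,z}\|[pyp,pzp]\|_2=0$), ``$pMp=\bbC p$'' (i.e.\ $p$ is minimal), and ``$p_1,\dots,p_m$ are mutually orthogonal, mutually equivalent abelian projections'' are all expressible by continuous formulas once the side conditions are encoded by penalty terms as in \S\ref{S.LMBA}. Granting this, set
\[
\Theta_M=\sup\bigl\{\tau(p_1+\dots+p_m): m\in\bbN,\ p_i \text{ orthogonal abelian projections}\bigr\}.
\]
For each fixed $m$ the inner supremum is the value of a single $\sup$-sentence, and $\Theta_M$ is an increasing limit of such values, so it depends only on $\Th(M)$. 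On the type $\mathrm{I}_n$ part the maximal total trace of orthogonal abelian projections equals $\tau(z_n)$, where $z_n$ is the maximal central projection of homogeneous type $\mathrm{I}_n$; hence $\Theta_M=\sum_n\tau(z_n)=\tau(\bigvee_n z_n)$. Since a finite von Neumann algebra has no type III part, $M$ is type I exactly when $\bigvee_n z_n=1$, i.e.\ when $\Theta_M=\tau(1)$. As both $\Theta_M$ and $\tau(1)$ are determined by $\Th(M)$, this proves the first assertion, with no separability hypothesis.

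For the isomorphism statement I would recover the individual invariants. Atoms of $\mu_n$ correspond exactly to finite-dimensional summands of $M$ isomorphic to $M_n(\bbC)$: an atom of mass $a$ is a minimal central projection $z$ with $Mz\cong M_n(\bbC)$ and $\tau(z)=a$. Using the definable matrix-unit and minimality conditions, for each $n,a$ and each $j$ the statement ``there are at least $j$ mutually orthogonal such $z$'' is the value of a $\sup$-sentence, so the atom multiset of each $\mu_n$ is read off from $\Th(M)$. For the atomless masses put
\[
\Phi_n=\sup\bigl\{\tau(p_1+\dots+p_n): p_i \text{ orthogonal, mutually equivalent abelian projections}\bigr\},
\]
again theory-determined; a direct computation gives $\Phi_n=n\sum_{m\geq n}\tau(z_m)/m$, whence $\tau(z_n)=\Phi_n-\tfrac{n}{n+1}\Phi_{n+1}$ is determined by $\Th(M)$ for every $n$. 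Then $c_n=\tau(z_n)-\sum(\text{atom masses of }\mu_n)$ is determined, and since an atomless separable measure space is fixed up to isomorphism by its total mass, each $(\Omega_n,\mu_n)$, and hence $M$, is determined up to isomorphism by $\Th(M)$. Given $M\equiv M_1$ with both separable and type I, all listed invariants agree, so $M\cong M_1$.

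The step I expect to be the main obstacle is the passage from ``supremum over constrained tuples'' to ``value of an honest $\calL$-sentence,'' and in particular the exact (as opposed to approximate) recovery of the atomic data and of the identity $\Theta_M=\tau(1)$. In continuous logic existential statements are only approximately witnessed, so distinguishing one atom of mass $2a$ from two atoms of mass $a$, and certifying that small atoms do not accumulate, requires the same care as the treatment of atoms in probability measure algebras. I would handle this exactly as in \S\ref{S.LMBA} and in the cited analyses of measure algebras, writing each invariant as a $\sup$/$\inf$ combination whose value is continuous in the parameters and provably insensitive to the approximate nature of the quantifiers; the definability lemmas of \S\ref{S.LMBA} serve as the template.
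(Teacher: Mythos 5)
Your overall strategy coincides with the paper's (Lemma~\ref{L.rho}): use the type decomposition $M\cong M_{II}\oplus\prod_m \bbM_m(L^\infty(X_m,\mu_m))$, observe that the complete invariant of the type I part consists of the masses $\tau(z_m)$ of the homogeneous summands together with the atom multisets and diffuse masses of the $\mu_m$, and show that this invariant is computable from $\Th(M)$. Your mechanism for extracting the numbers is genuinely different, though: where the paper reads off the cumulative sums $\sum_{j\leq m}\sum_n\rho_M(j,n)$ as suprema of traces of \emph{central} projections with $m$-subhomogeneous corners, you read off $\tau(z_I)$ and each $\tau(z_n)$ from suprema over tuples of orthogonal, mutually equivalent \emph{abelian} projections; your identity $\Phi_n=n\sum_{m\geq n}\tau(z_m)/m$ and the inversion $\tau(z_n)=\Phi_n-\tfrac{n}{n+1}\Phi_{n+1}$ are correct, and would give an alternative route to the same invariant.

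The gap is exactly at the point you flag, and your proposed repair does not close it. A supremum of $\tau(p_1+\cdots+p_m)$ over tuples constrained to lie in the zero set of a formula (projections, orthogonality, abelian corners, Murray--von Neumann equivalence) is a value, or limit of values, of $\calL$-sentences only if the constraint set is \emph{definable} in the continuous-logic sense, i.e., the distance to it is a definable predicate; otherwise the natural sentence $\sup_{\bar x}\bigl(\tau(\cdots)\dminus K\cdot\mathrm{penalty}(\bar x)\bigr)$ can strictly exceed the constrained supremum, since approximate witnesses need not be near exact ones. This is precisely the pitfall the paper flags (Henson's example: an intersection of definable sets need not be definable), and closing it for your constraints requires weak-stability lemmas in tracial von Neumann algebras: a projection $p$ with $\sup_{y,z}\|[pyp,pzp]\|_2$ small is $\|\cdot\|_2$-near an abelian projection, and almost orthogonal, almost equivalent abelian projections are near exactly orthogonal, exactly equivalent ones (the same issue afflicts your atom-counting sentences). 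These statements are provable by a type-decomposition/center-valued-trace argument, but they are nowhere in \S\ref{S.LMBA}: the definability lemmas there concern probability measure algebras, i.e., Boolean algebras with a measure, and say nothing about projections in von Neumann algebras, so they cannot serve as your template. The paper avoids proving any such stability results by leaning on two citable facts instead: the lattice of central projections is definable (from definability of the center, \cite[Lemma~4.1]{FaHaSh:Model1}), and $m$-subhomogeneity is axiomatizable (\cite[Theorem~2.5.1]{Muenster}). So either supply the stability lemmas for abelian projections and for equivalence, or reroute your suprema through central projections and subhomogeneity as the paper does; as written, the key claim that $\Theta_M$, $\Phi_n$, and the atomic data depend only on $\Th(M)$ is unjustified.
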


	Proposition~\ref{P.typeI} will follow from a more precise (and more obvious), statement (Lemma~\ref{L.rho}) given after a few clarifying remarks.

Note that being type I is not axiomatizable in language of tracial von Neumann algebras, since the category of type I tracial von Neumann algebras is not preserved under ultraproducts. (E.g., the ultraproduct of $\bbM_n(\bbC)$ for $n\in \bbN$ associated with a nonprincipal ultrafilter on $\bbN$ is an interesting  II$_1$ factor without property $\Gamma$.)
However, every tracial von Neumann algebra elementarily equivalent to $\bbM_n(\bbC)$ is isomorphic to it.

By the second part of Proposition~\ref{P.typeI},  type I tracial von Neumann algebras  behave similarly to compact metric structures, or to finite-dimensional \cstar-algebras (all of whose sorts are compact). 
More precisely, the second part of Proposition~\ref{P.typeI} is a poor man’s version of the fact that $\bbM_n(\bbC)\equiv A$ implies $\bbM_n(\bbC)\cong A$: Every tracial von Neumann algebra with separable predual elementarily equivalent to a given type I tracial von Neumann algebra $M$ with separable predual is isomorphic to it. In terminology of~\cite{Muenster}, being isomorphic to $M$ is \emph{separably axiomatizable}. In the standard model-theoretic terminology, the theory of~$M$ is $\aleph_0$-categorical (some authors write $\omega$-categorical, as the ordinal $\omega$ is routinely identified with the cardinal $\aleph_0$).

\begin{lemma}\label{L.rho}
If $M$ is a tracial von Neumann algebra, then there is a unique function 
\[
\rho_M\colon (\bbN\setminus \{0\})\times \bbN\to [0,1]
\] 
with the following properties. 
\begin{enumerate}
	\item  \label{1.P.typeI} $\sum_{m,n} \rho_M(m,n)\leq 1$, with the equality holding if and only if $M$ has type I. 
	\item \label{2.P.typeI} $\rho_M(m,n)\geq \rho_M(m,n+1)$ whenever $n\geq 1$. 
	\item \label{3.P.typeI} $M=\prod_{m\geq 1}{\bbM}_m(L^\infty(X_m,\mu_m))$, where $(X_m,\mu_m)$ is a measure space which has atoms of measure $\rho_M(m,n)$ for $n\geq 1$ (with multiplicities), and diffuse part of measure $\rho_M(m,0)$ (with $\mu(X_m)=\sum_n \rho_M(m,n)$).  
\end{enumerate}
Moreover, the function $\rho_M$ is computable from the theory of $M$. 
\end{lemma}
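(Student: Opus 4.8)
The plan is to separate the statement into a classical structural part, yielding existence, uniqueness, and properties (1)--(3), and a model-theoretic part, yielding computability from the theory. The first part is essentially the homogeneous decomposition of type~I von Neumann algebras together with the measure-theoretic classification of abelian algebras; the second part, which is the new content, shows that the numerical data of this decomposition are read off from the values of explicit sentences.

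For the structural part I would invoke the central decomposition of the finite von Neumann algebra $M$ into its homogeneous type~I$_m$ summands and a (possibly trivial) type~II$_1$ remainder. Writing $z_m$ for the maximal central projection with $Mz_m$ homogeneous of type~I$_m$, one has $Mz_m\cong\bbM_m(\mathcal Z_m)$ with $\mathcal Z_m=Z(M)z_m$ abelian, so $\mathcal Z_m\cong L^\infty(X_m,\mu_m)$, where I normalize $\mu_m$ by $\mu_m(A)=\tau(z_A)$ for the central subprojection $z_A\le z_m$ corresponding to $A\subseteq X_m$. Then $\mu_m(X_m)=\tau(z_m)$, and the identity $\tau|_{Mz_m}=\int_{X_m}\tr_m\,d\mu_m$ (with $\tr_m$ the normalized trace on $\bbM_m$) yields the representation (3) of the type~I part of $M$, which equals $M$ exactly when the type~II$_1$ remainder is absent, consistently with (1). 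The finite measure $\mu_m$ splits canonically into its diffuse part, of mass $\rho_M(m,0)$, and its atoms, whose masses, listed in non-increasing order, give $\rho_M(m,n)$ for $n\ge1$; this forces (2). Uniqueness follows because the atom masses and the diffuse mass are isomorphism invariants of the measured algebra $\mathcal Z_m$, and the non-increasing enumeration of the atom multiset is unique. Finally $\sum_{m,n}\rho_M(m,n)=\sum_m\tau(z_m)=\tau(\mathbf 1_{M_I})\le\tau(\mathbf 1)=1$, with equality precisely when the type~II$_1$ part vanishes, i.e. when $M$ is type~I; this is (1).

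For computability, the key observation is that the atoms of $\mu_m$ correspond bijectively, with equal trace, to the central projections $z$ for which $Mz\cong\bbM_m(\bbC)$ is a type~I$_m$ factor, and that this property is first-order expressible. Concretely, $Mz\cong\bbM_m(\bbC)$ iff $z$ is central and there is a system of $m\times m$ matrix units $(e_{ij})$ with $\sum_{i<m}e_{ii}=z$ and $e_{00}$ a minimal projection of $M$; minimality of $e_{00}$ is expressed (avoiding division by $\tau(e_{00})$) by saying that $e_{00}$ is a nonzero projection admitting no proper nonzero subprojection, using the definability of the set of projections and of the order $\le$ together with the trace. Each quantity I need is then the value of an explicit sentence: the total type~I$_m$ mass $\sum_{n\ge0}\rho_M(m,n)=\tau(z_m)$ is a $\sup$ of $\tau(z)$ over central $z$ admitting a system of $m\times m$ matrix units summing to $z$ with $e_{00}$ abelian (i.e. $Mz$ homogeneous of type~I$_m$); the predicate ``there are at least $k$ pairwise orthogonal type~I$_m$ factor-projections each of trace $>w$'' is a $\sup$ over the corresponding finite configuration of variables; and from these predicates, evaluated at rational $w$, one recovers the $k$-th largest atom mass as $\rho_M(m,k)=\sup\{w:\text{the predicate holds}\}$, with $\rho_M(m,0)$ obtained as the total mass minus $\sum_{n\ge1}\rho_M(m,n)$. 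As all these are values of sentences, $\rho_M$ is reconstructed from $\Th(M)$ by an explicit recipe.

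The main obstacle I anticipate is turning the configuration conditions into bona fide sentences whose values are exactly the intended suprema over \emph{exact} configurations. This requires that the relevant sets are definable and that the defining relations are (weakly) stable in tracial von Neumann algebras: the set of projections, the relation $p\le q$, minimality and abelianness of a projection, and finite systems of matrix units must all be cut out by definable predicates, so that an approximate solution lies close to an exact one and the $\sup$ over arbitrary tuples agrees with the $\sup$ over genuine configurations. Granting these standard facts about definability and stability in the tracial setting, the remaining extraction of the individual, decreasingly ordered atom masses from the aggregate threshold sentences is routine.
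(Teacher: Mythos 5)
Your structural half is correct and coincides with the paper's: decompose the finite algebra $M$ as $M_I\oplus M_{II}$, write the type~I part as $\prod_{m\geq 1}\bbM_m(L^\infty(X_m,\mu_m))$ via the homogeneous decomposition, split each $\mu_m$ into its diffuse part and its atoms enumerated in non-increasing order, and get uniqueness from isomorphism invariance of these data. Your computability strategy is also the paper's in outline: recover, for each $m$, the total type~I$_m$ mass and then the individual atom masses (with multiplicities, via ``at least $k$ orthogonal factor projections of trace $>w$'') as suprema of traces of central projections whose corners have prescribed structure.

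The soft spot is exactly the step you ``grant'': that the suprema of your sentences agree with the suprema over \emph{exact} configurations. Your configurations are cut out by a conjunction of conditions --- $z$ a central projection, $(e_{ij})$ a system of matrix units summing to $z$, $e_{00}$ abelian (or minimal, or a subprojection relation $q\leq e_{00}$) --- and even if each condition separately determines a definable set, their conjunction need not be definable: this is Henson's (unpublished) result that intersections of definable sets can fail to be definable, which the paper itself flags at precisely this point of its proof. So the agreement of approximate and exact witnesses is not a formal consequence of itemized ``standard facts''; it would require a bespoke uniform perturbation argument (correct an approximately central, approximately-matrix-unit, approximately-abelian tuple to an exact one with small $\|\cdot\|_2$ change, uniformly over all tracial von Neumann algebras), and for the minimal/abelian corner conditions no off-the-shelf citation exists. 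The paper discharges these obligations differently: it cites the definability of $Z(M)$ \emph{and of its lattice of projections} (\cite[Lemma 4.1]{FaHaSh:Model1}), which makes quantification over central projections legitimate, and the axiomatizability of $m$-subhomogeneity (\cite[Theorem~2.5.1]{Muenster}, which transfers to von Neumann algebras), applied to the corners $pMp$ (note $x\mapsto pxp$ maps the unit ball onto the unit ball of the corner); the total masses then come out as cumulative sums $\sum_{j\leq m}\sum_n\rho_M(j,n)$ and the atoms as traces of minimal central projections with $pMp\cong\bbM_m(\bbC)$. To complete your version, either prove the weak stability of your combined matrix-unit configurations, or reroute through the definable central projection lattice plus axiomatizable corner properties as the paper does; the rest of your extraction of $\rho_M$ from $\Th(M)$ is then correct and routine.
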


\begin{proof}
By the type decomposition of finite von Neumann algebras (\cite[\S V]{Tak:TheoryI}), $M$ is isomorphic to the direct sum $M_I\oplus M_{II}$, where $M_I$ is of type I and $M_{II}$ is of type II. 

By the same decomposition result, $M_I$ is of the form $\prod_{m\geq 1}{\bbM}_m(L^\infty(X_m,\mu_m))$ with $\sum_m \mu_m(X_m)=1$ (possibly with $\mu_m(X_m)=0$ for some $m$).  Since every finite measure space can be decomposed into diffuse and atomic part as specified, giving rise to $\rho_M$. Measures of the atoms are listed in decreasing order in order to assure \eqref{2.P.typeI}, securing the uniqueness of the function $\rho_M$. To be precise, let $Y_{m,n}$, for $n\in \bbN$, enumerate all atoms in the measure space $(X_m,\mu_m)$, listed in order of decreasing measure, with multiplicities. If there are only $k$ atoms,  then  let $Y_{m,n}=\emptyset$ for $n\geq k$. Finally let  $\rho_M(m,n)=\mu_m(Y_{m,n})$.   We therefore only need to explain how to determine $\rho_M$ from the theory of~$M$. 

First we use the fact that the center $Z(M)$ of a tracial von Neumann algebra $M$ is definable (this is essentially \cite[Lemma 4.1]{FaHaSh:Model1}). The proof shows that the lattice of projections in the center is also definable (this is not an immediate consequence of the fact that the set of projections is also definable, since by an unpublished result of Henson in continuous logic the intersection of definable sets is not necessarily definable). 

As observed in \cite[Theorem~2.5.1]{Muenster}, $m$-subhomogeneous \cstar-algebras are axiomatizable. This clearly extends to von Neumann algebras, by using the same formula. Therefore for every $m\geq 1$ the set (by $\tau$ we denote the distinguished tracial state of $M$)
\[
\{\tau(p)\mid p\in Z(M)\text{ is a projection  and $pMp$ is $m$-subhomogeneous}\}
\]
can be read off the theory of  $M$. The supremum of this set is equal to $\sum_{j\leq m}\sum_n \rho_M(j,n)$. Thus $\mu_m(X_m)$ is determined from $\Th(M)$. 

It remains to compute the measures of the atoms of each $\mu_m$. Again using the fact that the projections in $Z(M)$ form a definable set, these are the values of $\tau(p)$ where $p$ is a central projection such that $pMp\cong \bbM_m(\bbC)$. Multiplicities are handled similarly. 
\end{proof}

\begin{proof}[Proof of Proposition~\ref{P.typeI}]
Suppose that $M$ and $M_1$ are elementarily equivalent tracial von Neumann algebras and that $M$ has type I. Lemma~\ref{L.rho} implies that  $\rho_M=\rho_{M_1}$. By \eqref{1.P.typeI} of Lemma~\ref{L.rho} we have $\sum_{m,n} \rho_{M_1}(m,n)=\sum_{m,n} \rho_M(m,n)=1$, and therefore $M_1$ is also of type I.

Note that $\rho_M=\rho_{M_1}$ implies that the atomic parts of $M$ and $M_1$ are isomorphic. Since every diffuse abelian tracial von Neumann algebra is isomorphic to $L^\infty([0,1])$, if in addition to being of type I and elementarily equivalent both $M$ and $M_1$ have separable predual, then they are isomorphic.   
\end{proof}

The following well-known lemma uses the notion of the eq of a metric structure, studied in \cite[\S 3]{Muenster}. Briefly, if $M$ is a metric structure then $M^{\eq}$ is formed from $M$ by expanding it as follows (see \cite[\S 3.3]{Muenster} for details). First one adds all countable products of sorts in $M$ and equips them with natural product metric. Second, one adds all definable subsets of such products. Third, one takes quotients by all definable equivalence relations on such definable sets. The structure obtained in this way is denoted $M^{\eq}$. Its theory $T^{\eq}$ depends only $T=\Th(M)$, the category of models of $T^{\eq}$ is equivalent to the category of models of $T$,  and it is abstractly characterized as the largest conservative extension of the category of models of $T$ (this is \cite[Theorem 3.3.5]{Muenster}).  It is easier than its \cstar-algebraic analog, \cite[Lemma 3.10.2]{Muenster}.

\begin{lemma}\label{L.eq}
	If $N$ and $F$ are tracial von Neumann algebras and $F$ is finite-dimensional, then $F\bar\otimes N$ is in $N^{\eq}$. Thus, if $M$ is also a tracial von Neumann algebra such that $M\equiv N$ ($M\preceq M$), then $F\bar\otimes M \equiv F\bar\otimes N$ ($F\bar\otimes M\preceq F\bar \otimes N$).
\end{lemma}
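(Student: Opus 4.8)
The plan is to prove Lemma~\ref{L.eq} in two stages: first establish the structural claim that $F\bar\otimes N$ sits inside $N^{\eq}$, and then derive the elementarity consequences formally from the abstract properties of the $\eq$ construction recorded just before the lemma. For the first stage, I would use the classification of finite-dimensional tracial von Neumann algebras: $F\cong\bigoplus_{i<r}\bbM_{n_i}(\bbC)$, so it suffices (by taking finite direct sums, which $N^{\eq}$ accommodates) to treat $F=\bbM_n(\bbC)$. Then $\bbM_n(\bbC)\bar\otimes N\cong \bbM_n(N)$, the algebra of $n\times n$ matrices over $N$, whose underlying set is naturally identified with the $n^2$-fold product of the unit ball sorts of $N$ (a tuple $(a_{ij})_{i,j<n}$), equipped with the trace $\tfrac1n\sum_i \tau(a_{ii})$ and the matrix $^*$-algebra operations. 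The point is that each of these operations and the trace are given by fixed $\calL$-terms and predicates applied coordinatewise on the product sort, so $\bbM_n(N)$ is obtained from $N$ by passing to a (definable) product sort and reading off definable operations on it — exactly the kind of object available in $N^{\eq}$.

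The one genuine subtlety, and what I expect to be the main obstacle, is verifying that $\bbM_n(N)$ really is captured by the $\eq$ construction as described (products of sorts, definable subsets, quotients by definable equivalence relations) rather than merely being ``coordinatized'' by $N$ in an informal sense. Concretely, I would need to check that the relation identifying a tuple $(a_{ij})$ with a genuine element of the matrix algebra, together with the matrix operations and the normalized trace, are all \emph{definable} predicates/functions over the product sort in the sense of \cite[\S 3.3]{Muenster}. Since matrix multiplication, adjoint, and the trace are polynomial (hence uniformly continuous on bounded sets) expressions in the entries, and the operator-norm constraints defining the correct domain are definable conditions, this should go through; but it is exactly the step where one must appeal carefully to the definability machinery rather than wave hands. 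I would cite the parallel \cstar-algebraic fact \cite[Lemma 3.10.2]{Muenster} as the template, noting (as the excerpt does) that the tracial von Neumann case is easier because the relevant sorts are bounded.

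For the second stage, once $F\bar\otimes N\in N^{\eq}$ is established, the elementarity transfer is essentially formal. The construction $N\mapsto N^{\eq}$ has the property that $T^{\eq}=\Th(N^{\eq})$ depends only on $T=\Th(N)$, and that the interpretation of $F\bar\otimes N$ inside $N^{\eq}$ is given by a fixed syntactic recipe (the same product-of-sorts, definable-operations recipe) independent of the particular model. Hence if $M\equiv N$, then $M^{\eq}\equiv N^{\eq}$, and the copies of $F\bar\otimes M$ and $F\bar\otimes N$ sitting inside them via the identical interpretation satisfy the same sentences; this gives $F\bar\otimes M\equiv F\bar\otimes N$. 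For the second clause, if $M\preceq N$ then the induced map $M^{\eq}\preceq N^{\eq}$ is elementary, and restricting an elementary embedding to a definable sub-interpretation remains elementary, yielding $F\bar\otimes M\preceq F\bar\otimes N$ under the natural identification. (I note in passing that the statement's ``$M\preceq M$'' is evidently a typo for ``$M\preceq N$,'' which is the hypothesis I would use.)
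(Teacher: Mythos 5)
Your proposal is correct and takes essentially the same route as the paper: reduce to $F=\bbM_m(\bbC)$ via the direct-sum decomposition of finite-dimensional algebras, identify the unit ball of $\bbM_m(\bbC)\bar\otimes N$ with a definable subset of $N^{m^2}$ carrying the matrix operations and the trace, and then obtain the elementarity transfer formally from the properties of the $\eq$ construction. If anything, the paper's own proof is terser than yours (it likewise leaves the definability verification and the final transfer step implicit, relying on the discussion of $N^{\eq}$ preceding the lemma), and you are right that ``$M\preceq M$'' in the statement is a typo for ``$M\preceq N$.''
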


\begin{proof}
	We first consider the case  $F={\bbM}_m(\bbC)$ for some $m\geq 1$. Then the distinguished tracial state of $F\bar\otimes N$ is $\sigma=\trm\otimes \tau$ (where $\trm$ is the normalized tracial state on $\bbM_m(\bbC)$ and $\tau$ is the distinguished tracial state of $N$). 
	Then the unit ball of $(F\bar\otimes N,\sigma)$ can be identified with a subset of the unit ball $N^{m^2}$ with the naturally defined matrix arithmetic  operations and tracial state $\sigma((a_{ij})_{i,j\leq m})=\sum_{i\leq m}\tau(a_{ii})$.
	
	For the general case, note that $F$ is the direct sum of full matrix algebras, $F=\bigoplus_{j\leq k} {\bbM}_{l(j)}(\bbC)$ and that its distinguished tracial state is a convex combination of $\tr_{l(j)}$, for $j\leq k$. By the argument similar to the one for ${\bbM}_m(\bbC)$, $F\bar\otimes N$ can be identified with $N^{\sum_{j\leq k}l(j)^2}$ with the appropriately defined arithmetic operations and distinguished tracial state. 
\end{proof}

\begin{proof}[Proof of Theorem~\ref{T.tensor}] We will prove the theorem for $\preceq$. The proof for $\equiv$ is analogous, and alternatively it follows by the fact that elementarily equivalent structures can be elementarily embedded into the same structure. 
	
	The result follows from the conjunction of the following two statements. 
	\begin{enumerate}
		\item \label{1.T.tensor}If $M,N,N_1$ are tracial von Neumann algebras, $N_1\preceq N$,  and $M$ has type I, then $M\bar\otimes N_1\preceq M\bar \otimes N$. 
		\item \label{2.T.tensor} If $M,N,N_1$ are tracial von Neumann algebras, $N_1\preceq N$,  and $N$ has type I, then $M\bar\otimes N_1\preceq M\bar \otimes N$. 
	\end{enumerate}
	
		\eqref{1.T.tensor} As in the proof of Proposition~\ref{P.typeI}, $M=\prod_{m\geq 1}\bbM_m(L^\infty(X_m,\mu_m))$. By Corollary \ref{FV-corollary 2}, $N_1\preceq N$ implies that  $N_1(m)=L^\infty(X_m,\mu_m)\bar\otimes N_1$ is an elementary submodel of $N(m)= L^\infty(X_m,\mu_m)\bar\otimes N$. 
		
		The matrix case of Lemma~\ref{L.eq} implies  $\bbM_m(N_1(m))\preceq \bbM_m(N(m))$ for all~$m$. Corollary \ref{FV-corollary 1} applied to the measure measure space $(\bbN, \mu)$ where $\mu$ is a probability Radon measure implies that  $M\bar\otimes N_1\preceq M\bar \otimes N$.
		
		\eqref{2.T.tensor} Analogously to case \eqref{1.T.tensor}, $N=\prod_{m\geq 1}\bbM_m(P_m)$ for some abelian von Neumann algebras $P_m$.   By 
		Proposition~\ref{P.typeI}, $N_1=\prod_{m\geq 1}\bbM_m(P_{1,m})$ with $P_{1,m}\equiv P_m$ for all $m$, and furthermore the algebras $P_m$ and $P_{1,m}$ have atoms and diffuse parts  of the same measure. This implies that  $P_m$ and $P_{1,m}$ are isomorphic. Therefore, $P_{1,m}\bar\otimes M\preceq P\bar\otimes M$.  Lemma~\ref{L.eq} implies that further tensoring with $\bbM_m(\bbC)$ preserves elementarity, and hence Corollary \ref{FV-corollary 1} implies that $\prod_{m\geq 1}\bbM_m(P_{1,m}\bar\otimes M)$ is an elementary submodel of $\prod_{m\geq 1}\bbM_m(P_{m}\bar\otimes M)$. That is, $M\bar\otimes N_1\preceq M\bar \otimes N$.
\end{proof}

\section{Concluding remarks}
\label{S.concluding}

In general, tensoring with strongly self-absorbing \cstar-algebras (\cite{ToWi:Strongly}) does not preserve elementarity (\cite[Proposition 6.2]{farah2022between}). All known examples of the failure of preservation of elementary equivalence by tensor products relied on failure of regularity properties of \cstar-algebras, such as definability of tracial states (\cite[Proposition~6.2]{farah2022between}) and stable rank being greater than 1 (\cite[Corollary~3.10.4]{Muenster}, using \cite[Theorem~3.1]{Phi:Exponential}).

However, known results imply that tensoring with a strongly self-ab\-sor\-bing \cstar-algebra $D$ preserves elementary equivalence in a large class of \cstar-algebras. 
It follows from  \cite[Corollary~2.7.2]{Muenster} that if $A$ is tensorially $D$-absorbing, then $A\preceq A\otimes D$ via the map that sends $a$ to $a\otimes 1_D$ for all $a\in A$ (since $D\cong D^{\bigotimes \bbN}$ by \cite[Proposition~1.9]{ToWi:Strongly}). 
Being tensorially $D$-absorbing is separably axiomatizable by \cite[Theorem~2.5.2]{Muenster}. (A property is \emph{separably axiomatizable} if there is a theory $T$ such that all separable models of $T$ satisfy this property.) Let $\bbTD$ be the theory of separable $D$-absorbing \cstar-algebras (see \cite[\S 2.1]{FaHaRoTi:Relative}). Models of $\bbTD$ are called \emph{potentially $D$-absorbing} (\cite[Definition~2.4]{FaHaRoTi:Relative}), and they  have the property that all of their separable elementary submodels are $D$-absorbing. 
 The Downward L\"owenheim--Skolem theorem thus implies that if $A$ is potentially $D$-absorbing and $A\equiv B$, then $A\otimes D\equiv B\otimes D$. 
 
 If   $A\preceq B$ and $A$ is potentially $D$-absorbing, then by identifying $A$ with $A\otimes 1_D$ in $A\otimes D$ we have $A\preceq B\otimes D$.
A slightly finer analysis using \cite[Lemma~1.4]{FaHaRoTi:Relative} applied to the inclusions $A\subseteq A\otimes D\subseteq B\otimes D$ (or directly using the Tarski-Vaught test) shows that $A\otimes D\preceq B\otimes D$.
 
 \begin{proposition} \label{prop. pot-D-abs}
  If  $A\preceq B$ and $A$ is potentially $D$-absorbing, then $A\otimes D\preceq B\otimes D$ via the natural embedding that fixes $1\otimes D$.
 \end{proposition}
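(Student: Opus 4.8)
The plan is to assemble the proposition from two ingredients already in place: that the canonical unital embedding $x\mapsto x\otimes 1_D$ is elementary for every potentially $D$-absorbing algebra, and the transitivity and downward-absorption properties of $\preceq$ recorded in \S\ref{S.elementarity}. First I would note that $A\preceq B$ gives $A\equiv B$, so $B$ is a model of $\bbTD$ as well; that is, $B$ is also potentially $D$-absorbing.

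Next I would record the elementarity of the two canonical embeddings. By \cite[Corollary~2.7.2]{Muenster} applied to the separable $D$-absorbing elementary submodels produced by the Downward L\"owenheim--Skolem theorem (exactly the reasoning used above for the $\equiv$-version), both $A\to A\otimes D$ and $B\to B\otimes D$, $x\mapsto x\otimes 1_D$, are elementary; in particular $A\preceq A\otimes D$ and $B\preceq B\otimes D$. Chaining $A\preceq B\preceq B\otimes D$ then yields $A\preceq B\otimes D$, and since the composite embedding is precisely $a\mapsto a\otimes 1_D$, it factors through the inclusions $A\subseteq A\otimes D\subseteq B\otimes D$. Thus $A$ is an elementary submodel of both the middle and the top term of this chain.

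The final and genuinely delicate step is to upgrade this to $A\otimes D\preceq B\otimes D$. I would carry it out with the Tarski--Vaught test, in the form of \cite[Lemma~1.4]{FaHaRoTi:Relative}, applied to the chain $A\subseteq A\otimes D\subseteq B\otimes D$. Concretely, given a formula $\varphi(x,\bar y)$ and parameters $\bar e\in A\otimes D$, one must approximate a near-optimal witness $c\in B\otimes D$ for $\inf_x\varphi^{B\otimes D}(x,\bar e)$ by a witness $c'\in A\otimes D$ attaining essentially the same value. Two features make this possible: every such $\bar e$ lies, up to $\varepsilon$, in the subalgebra generated by $A\otimes 1_D$ together with $1_A\otimes D$, a subalgebra common to $A\otimes D$ and $B\otimes D$; and the approximately central copies of $D$ furnished by strong self-absorption (via $D\cong D\otimes D$) allow one to conjugate $c$ into $A\otimes D$ with negligible change to the value of $\varphi$. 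Combined with the elementarity of $A$ in $B\otimes D$ from the previous step, this produces the required $c'$.

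I expect this last step to be the main obstacle, precisely because the tempting shortcut is circular: reducing $A\otimes D\preceq B\otimes D$ to separable pieces via Downward L\"owenheim--Skolem already presupposes that $\otimes D$ preserves $\preceq$ between those pieces, which is the statement at issue. The formal closure properties of $\preceq$ from \S\ref{S.elementarity} only transport elementarity \emph{downward} (from ``$M\preceq R$, $N\preceq R$, $M\subseteq N$'' one obtains $M\preceq N$, never $N\preceq R$), whereas here the content is to transport it \emph{upward} from $A$ to $A\otimes D$. That upward transfer is exactly what the self-absorption input behind \cite[Lemma~1.4]{FaHaRoTi:Relative} supplies, and arranging its hypotheses to fit the chain $A\subseteq A\otimes D\subseteq B\otimes D$ is where the care is required.
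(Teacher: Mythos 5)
You follow the paper's architecture exactly: first establish $A\preceq B\otimes D$ (elementarity of the first-factor embeddings, \cite[Corollary~2.7.2]{Muenster}, plus transitivity), then upgrade along the chain $A\subseteq A\otimes D\subseteq B\otimes D$ via the Tarski--Vaught test, i.e.\ \cite[Lemma~1.4]{FaHaRoTi:Relative}. The gap is in your execution of the upgrade, which you yourself identify as the crux. The mechanism you propose---a unitary that approximately commutes with the parameters $\bar e$ and conjugates the witness $c\in B\otimes D$ into a neighborhood of $A\otimes D$---would fail. Approximately central copies of $D$ furnish unitaries realizing prescribed approximate commutation relations, but they give no reason whatsoever that some unitary conjugate of an arbitrary element of the (possibly nonseparable) algebra $B\otimes D$ should land near the (possibly separable) subalgebra $A\otimes D$. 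Moreover the proposed step is self-undermining: if such a $u$ existed, then $\varphi(ucu^*,\bar e)\approx\varphi(ucu^*,u\bar e u^*)=\varphi(c,\bar e)$ by automorphism-invariance of formulas, so $ucu^*$ would already be, up to a perturbation, the required witness, and the elementarity $A\preceq B\otimes D$ that you say you ``combine'' with the conjugation would never be used. An argument for the Tarski--Vaught step in which neither the hypothesis that $A$ is potentially $D$-absorbing nor the previously established elementarity of $A$ does any work cannot be the right one.

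The mechanism that does work (and is what the citation of \cite[Lemma~1.4]{FaHaRoTi:Relative} encapsulates) runs in the opposite direction: one conjugates the \emph{parameters} down into $A$, not the witness into $A\otimes D$. Given $\bar e\in A\otimes D$ and $\varepsilon>0$, choose a separable $A_0\preceq A$ with $\bar e$ within $\varepsilon$ of $A_0\otimes D$; since $A$ is potentially $D$-absorbing, $A_0$ is $D$-absorbing, and by the Toms--Winter characterization of $D$-absorption for separable algebras \cite{ToWi:Strongly} the identity map of $A_0\otimes D$ is approximately unitarily equivalent, by unitaries of $A_0\otimes D$, to an isomorphism of $A_0\otimes D$ onto $A_0\otimes 1_D$. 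Hence there is a unitary $u\in A\otimes D$ moving $\bar e$ to within $2\varepsilon$ of $A$. Since $\Ad(u)$ is an automorphism of $B\otimes D$, it preserves values of formulas; the conjugated parameters now lie essentially in $A$, so $A\preceq B\otimes D$ produces $x\in A$ with $\varphi(x,u\bar e u^*)\leq\varphi(ucu^*,u\bar e u^*)+\varepsilon=\varphi(c,\bar e)+\varepsilon$ (up to the modulus of continuity of $\varphi$), and $c'=u^*xu\in A\otimes D$ is the desired witness. This is precisely where $A$'s own absorption enters, through unitaries of $A\otimes D$ rather than central sequences of $B\otimes D$. Finally, note that your second step suffers from the very circularity you flag in the third: deducing $B\preceq B\otimes D$ for a nonseparable, merely potentially $D$-absorbing $B$ ``by DLS'' tacitly uses $B_0\otimes D\preceq B\otimes D$ for separable $B_0\preceq B$, an instance of the proposition itself. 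It is repaired either by the same parameter-conjugation lemma applied inside a sufficiently saturated elementary extension of $B$ containing a unital copy of $D$ commuting with $B$, or by proving the separable case first and bootstrapping via directed unions of elementary submodels.
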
 

By a result of Connes, the hyperfinite II$_1$ factor $R$ is the only strongly self-absorbing II$_1$ factor. Potentially $R$-absorbing tracial von Neumann algebras are the McDuff factors, and an argument analogous to that of the previous paragraph shows that if $M\equiv N$ and $M$ is McDuff, then $M\botimes R\equiv N\botimes R$ and if $M\preceq N$ and $M$ is McDuff then $M\botimes R\preceq N\botimes R$.

It is not known whether tensor products of tracial von Neumann algebras preserve elementarity or not. The fact that theories of type~II$_1$ tracial von Neumann algebras do not admit quantifier elimination (see \cite{farah2023quantifier} and \cite{farah2023quantum} for a finer result) makes the question of preservation of elementarity more intricate. 

\begin{definition}
If $M=\int^\oplus_\Omega M_\omega\, d\mu(x)$ is a direct integral, then the \emph{distribution of the theories} in the measurable field $M_\omega$, for $\omega\in \Omega$, is the function $\alpha$ that to every $n\geq 1$ and every $n$-tuple of $\calL$-sentences $\bar\varphi=\langle \varphi_j: j\leq n\rangle$,  associates the distribution $\alpha_{\bar \varphi}\colon [0,1]^n\to [0,1]$ by 
\[
\alpha_{\bar \varphi}(\bar r)=\mu\{\omega \mid \varphi_j^{M_\omega}>r_j\text{ for all $j\leq n$}\}. 
\]
\end{definition}

Theorem~\ref{T.FV-integral} (together with Lemma~\ref{L.affine.12} that provides translation between languages $\LLTWO$ and $\LLONE$) has the following natural corollary. 

\begin{corollary} The theory of a direct integral $M=\int^\oplus_\Omega  M_\omega\, d\mu(\omega)$ is uniquely determined by the distribution of the theories in the measurable field $M_\omega$, for $\omega\in \Omega$. \qed 
\end{corollary}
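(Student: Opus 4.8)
The plan is to extract the value $\varphi^M$ from the distribution of theories through the mechanism supplied by Theorem~\ref{T.FV-integral}. Since elementary equivalence is decided sentence by sentence, it suffices to show that for each $\calL$-sentence $\varphi$ the number $\varphi^M$ depends only on the distribution $\alpha$ of the theories in the field $(M_\omega)_{\omega}$. First I would reduce to a determined sentence: the uniform metric satisfies $|\varphi^M-\psi^M|\leq d(\varphi,\psi)$ simultaneously in every structure, so by Theorem~\ref{T.FV-integral} any $\varphi$ is a uniform limit of determined sentences $\psi_n$ with $\varphi^M=\lim_n\psi_n^M$; hence if each $\psi_n^M$ is a function of $\alpha$, so is $\varphi^M$.

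So fix a determined sentence $\varphi$ and $k\geq 2$. Because $\varphi$ has no free variables, an inspection of the construction shows that every $\zeta\in\bbF[\varphi]$ is again an $\calL$-sentence (in the $\sup$/$\inf$ clause the new formulas $\xi_\alpha=\sup_y\min_{\zeta\in\dom(\alpha)}(\zeta(y)-\alpha(\zeta))$ have only the bound variable $y$). Consequently the sets $Z^\zeta_{i/l(k,\varphi,\zeta)}$ of \eqref{eq.Zeta} are genuine measurable subsets of $\Omega$ whose finite intersections have measures prescribed by $\alpha$: for sentences $\zeta_1,\dots,\zeta_r$ and thresholds $t_1,\dots,t_r$ one has $\mu\bigl(\bigcap_{j\le r}Z^{\zeta_j}_{t_j}\bigr)=\alpha_{\bar\zeta}(\bar t)$, and by inclusion--exclusion this fixes the measure of every Boolean combination of the $Z^\zeta_{i/l}$. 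By the remark following Definition~\ref{Def.1/k.determined} (i.e.\ conditions \ref{I.theta.l.1} and \ref{I.theta.l.2}), $\varphi^M$ lies within $2/k$ of the single number $G_{\varphi,k}\bigl(Z^\zeta_{i/l(k,\varphi,\zeta)};\, i<l(k,\varphi,\zeta),\ \zeta\in\bbF[\varphi]\bigr)$.

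The crux is therefore to show that this number depends only on the joint distribution of its arguments, hence only on $\alpha$. I would prove, by induction on complexity following the construction in the proof of Theorem~\ref{T.FV-integral}, that the value of every $G_{\varphi,k}$ on a tuple $\bar A$ in any measure algebra is a fixed continuous function of the measures of the Boolean combinations of $\bar A$; equivalently, that $G_{\varphi,k}$ is quantifier-free. The atomic, scalar-multiplication and $\dminus$ clauses introduce no quantifiers (the substitution $\bar Z\mapsto(\overleftarrow{Z^{1-\zeta}})^\complement$ is Boolean), so they preserve this property. The only quantifier enters the $\sup$/$\inf$ clause, where $G_{\varphi,k}(\bar Z)=\sup_{\bar Y\in\cY[\bar Z]}G_{\psi,k}(\bar Y)$. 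Here the feasible region $\cY[\bar Z]=\prod_{\zeta}\cX[\bar U^\zeta]$ (as in Lemma~\ref{L.definability+}) has a coordinatewise largest element $\bar U$: each $U^\zeta_m=\bigcap\{Z^{\xi_\alpha}_0:\alpha\in\cC,\ \zeta\in\dom(\alpha),\ \alpha(\zeta)\le m/\ell(\zeta)\}$ is a finite intersection of the arguments, the chain $U^\zeta_0\supseteq\dots\supseteq U^\zeta_{\ell(\zeta)-1}$ forces $Y^\zeta_m=U^\zeta_m$ to satisfy the defining inequalities of $\cX[\bar U^\zeta]$ with equality, and every $\bar Y\in\cX[\bar U^\zeta]$ obeys $Y^\zeta_m\leq U^\zeta_m$. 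Since $G_{\psi,k}$ is coordinatewise increasing, the supremum is attained at $\bar U$, so $G_{\varphi,k}(\bar Z)=G_{\psi,k}(\bar U(\bar Z))$ is the composition of the (inductively quantifier-free) $G_{\psi,k}$ with the Boolean map $\bar Z\mapsto\bar U(\bar Z)$, and is itself quantifier-free. I expect this elimination of the measure-algebra quantifiers to be the main obstacle, as it is exactly the point where the Feferman--Vaught data could a priori depend on the ambient measure algebra rather than on $\alpha$ alone.

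Combining the three steps, $G_{\varphi,k}(Z^\zeta_{i/l};\dots)$ is a fixed continuous function of the values of $\alpha$, and $\varphi^M$ differs from it by at most $2/k$. Letting $k\to\infty$ shows that $\varphi^M$ is determined by $\alpha$ for every determined sentence, and the density reduction of the first paragraph extends this to all sentences, which is the assertion of the corollary.
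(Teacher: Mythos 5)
Your reconstruction follows exactly the route the paper intends (the corollary is stated as immediate from Theorem~\ref{T.FV-integral}, with no written proof), and you have isolated the one point that is genuinely not automatic: $G_{\varphi,k}$ contains a quantifier over the definable set $\cY[\bar Z]$, so its value at the tuple $(Z^\zeta_{i/l})$ could a priori depend on the ambient measure algebra rather than only on the measures of Boolean combinations of these sets, which is all the distribution $\alpha$ records. Your resolution of this point is also correct as a statement about the formulas actually constructed in the paper's proof: $\cY[\bar Z]$ has a coordinatewise largest element $\bar U$, each $U^\zeta_m$ is a finite intersection of the arguments, and since $G_{\psi,k}$ is coordinatewise increasing the supremum is attained at $\bar U$; inductively every $G_{\varphi,k}$ thus reduces to a composition of Boolean terms, measures, and connectives, i.e.\ is effectively quantifier-free.

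The problem is that this proves too much, and that is where the argument (yours and the paper's) breaks down. If the $G_{\varphi,k}$ were simultaneously quantifier-free in your sense and satisfied \ref{I.theta.l.1}--\ref{I.theta.l.2}, the corollary would hold across arbitrary base spaces; it does not. Take the constant field $M_\omega=\bbC$, once over a one-point space and once over $([0,1],\lambda)$: the two fields have identical distributions of theories, yet the direct integrals are $\bbC$ and $L^\infty[0,1]$, which are not elementarily equivalent (the sentence $\inf_p\max\{d(p,p^2),d(p,p^*),|\tau(p)-\tfrac12|\}$ vanishes in the second and is bounded away from zero in the first). This is the usual randomization phenomenon---a $\sup_y$ of a minimum of predicates can strictly increase when the base becomes atomless---and it is why the paper's own remark after Corollary~\ref{FV-corollary 1} stresses that Ben Ya'acov's quantifier-elimination argument applies only to atomless measure spaces. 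Consequently conditions \ref{I.theta.l.1} and \ref{I.theta.l.2} cannot hold for the paper's $G_{\varphi,k}$, and the step of your proof that invokes them (``$\varphi^M$ lies within $2/k$ of $G_{\varphi,k}(\dots)$'') is the one that fails; your own collapse argument is what makes this visible. The error upstream is in the paper's verification of \ref{I.theta.l.1} in the $\sup$ case: the sets $Y^\zeta_i=Z^{\zeta}_{i/\ell(\zeta)}[\bar a,b]$ are claimed to satisfy \ref{I.FV.1}, but $\omega\in Y^\zeta_i$ only says that the witness $b_\omega$ makes $\zeta$ large, not that it makes every $\eta\in\dom(\alpha)$ large simultaneously, so $Y^\zeta_i\not\subseteq Z^{\xi_\alpha}_0$ in general once $|\dom(\alpha)|\geq 2$. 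In short, your reasoning is faithful to the paper and internally sound, but the corollary as stated (with the paper's definition of distribution) is false; a correct version needs either an atomless base, where quantifier elimination for atomless measure algebras does the work your collapse argument was meant to do, or a notion of distribution that remembers how the theories sit relative to the atoms of the measure algebra.
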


The converse to this corollary is in general false, it is not possible to disintegrate the theory of a direct integral to recover the theories of $M_\omega$ for almost all $\omega$. However, every tracial  von Neumann algebra with separable predual admits a disintegration into a measurable field of factors that is essentially unique (\cite[\S IV]{Tak:TheoryI}). 
Confirming Conjecture~4.5 from the original version of the present paper, David Gao and David Jekel proved that if $M$ is a direct integral of a measurable field of II$_1$ factors then its theory uniquely determines the distribution of theories of II$_1$ factors in this measurable field. The proof uses a variant of \cite{ben2009randomizations}. Together with an easy Fubini-type argument, this theorem  implies the following.

\begin{corollary}\label{C1}
	Tensor products of tracial von Neumann algebras preserve elementary equivalence if and only if tensor products of II$_1$ factors preserve elementary equivalence.  \qed 
\end{corollary}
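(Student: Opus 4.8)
The plan is to prove Corollary~\ref{C1} by reducing an arbitrary pair of tracial von Neumann algebras to their disintegrations into II$_1$ factors and invoking the Gao--Jekel theorem together with the Feferman--Vaught machinery already developed. The forward direction (if tensor products preserve elementary equivalence, then they do so for II$_1$ factors) is trivial, since II$_1$ factors are a special case of tracial von Neumann algebras. So the entire content lies in the converse: assuming that tensor products of II$_1$ factors preserve elementary equivalence, derive the same for all tracial von Neumann algebras.

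First I would set up the disintegrations. Given tracial von Neumann algebras $M\equiv M'$ and $N\equiv N'$ with separable predual, I would use the essentially unique disintegration of each into a measurable field of factors over a probability space (cited in the excerpt from~\cite[\S IV]{Tak:TheoryI}), writing $M=\int^\oplus M_\omega\,d\mu(\omega)$ and similarly for the others. The von Neumann tensor product commutes with direct integrals, so $M\bar\otimes N$ disintegrates as $\int^\oplus_{\Omega\times\Omega'} M_\omega\bar\otimes N_{\omega'}\,d(\mu\times\mu')$, which is the Fubini-type observation alluded to in the excerpt; the fibers $M_\omega\bar\otimes N_{\omega'}$ are tensor products of factors, though not necessarily of II$_1$ factors (type I factors may occur). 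One must therefore separate the type I fibers, where Theorem~\ref{T.tensor} already gives preservation, from the genuinely II$_1$ fibers.

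Next I would apply the Gao--Jekel theorem in both directions. By that result, the hypothesis $M\equiv M'$ forces the distribution of the theories of the II$_1$ factors in the disintegration of $M$ to agree with that of $M'$, and likewise for $N$ and $N'$; the type I parts are controlled completely by $\rho_M$ via Lemma~\ref{L.rho} and Proposition~\ref{P.typeI}. Under the standing assumption that tensor products of II$_1$ factors preserve elementary equivalence, the fiberwise tensor products $M_\omega\bar\otimes N_{\omega'}$ and $M'_{\omega}\bar\otimes N'_{\omega'}$ have matching theories wherever both fibers are II$_1$ factors, and the type I cases are handled by Theorem~\ref{T.tensor}. Thus the distribution of theories of factors in the disintegration of $M\bar\otimes N$ coincides with that of $M'\bar\otimes N'$. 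Finally, Corollary~\ref{FV-corollary 1} (or the displayed Corollary that the theory of a direct integral is determined by the distribution of theories in the field) lets me conclude $M\bar\otimes N\equiv M'\bar\otimes N'$.

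The main obstacle I anticipate is bookkeeping the interaction between the type I and II$_1$ parts of the two disintegrations across the product space $\Omega\times\Omega'$, and in particular verifying that the tensor product of a type I factor fiber with a II$_1$ factor fiber is correctly subsumed under the already-established cases rather than requiring a new argument. A tensor product $M_\omega\bar\otimes N_{\omega'}$ with $M_\omega$ of type I is covered by Theorem~\ref{T.tensor}, so the only fibers demanding the new II$_1$ hypothesis are those where both factors are of type II$_1$; confirming this dichotomy is clean but must be stated carefully. A secondary technical point is ensuring the measurability and separability hypotheses needed to invoke the Gao--Jekel disintegration uniqueness and Corollary~\ref{FV-corollary 1} are met after forming the product field, which should follow from standard Fubini arguments for measurable fields but warrants an explicit remark.
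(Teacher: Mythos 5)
Your proposal is correct and takes essentially the same route as the paper: the paper supplies no detailed proof of this corollary, presenting it as an immediate consequence of the Gao--Jekel disintegration theorem ``together with an easy Fubini-type argument,'' which is exactly what you reconstruct --- disintegrate into factors, write $M\bar\otimes N$ as a direct integral over the product measure space, match distributions of fiber theories, handle fibers with a type I tensor factor via Theorem~\ref{T.tensor} and the genuinely II$_1$ fibers via the hypothesis, and conclude from the fact that the distribution of fiber theories determines the theory of the direct integral. The one point you (like the paper) leave implicit is that the Gao--Jekel theorem as quoted applies to fields of II$_1$ factors, so invoking it for a mixed-type algebra $M$ tacitly requires that $\Th(M)$ determines the theory of the type II$_1$ direct summand of $M$ (the type I data being controlled by Lemma~\ref{L.rho}); this is the same step the paper's ``easy Fubini-type argument'' elides.
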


While a Feferman--Vaught-style theorem technically solves the problem of computing the theory of a given structure, it is desirable to have a more efficient procedure. 
In \cite{palyutin1} and \cite{palyutin2},  Palyutin isolated a class of so-called \emph{$h$-formulas} and shown that they satisfy a version of \L o\'s's theorem in every reduced product $M=\prod_{i\in I} M_i/\cI$ and that if in addition the Boolean algebra $ \cP(I)/\cI$ is atomless then every formula in the language of $M$ is equivalent to a Boolean combination of $h$-formulas. This has, for example, been used to provide a much simpler proof of $\aleph_1$-saturation of reduced products associated with a countable ideal in \cite{palmgren}. The analog of Palyutin’s theory for continuous logic has been developed in \cite{fronteau2023produits}. 

It would be desirable to develop analogous theory for direct integrals of structures in place of reduced products. We will now describe some partial results along these lines.   
In \cite{bagheri2010los}, Bagheri proved a preservation theorem for affine formulas under direct integrals. He introduced a variant of the continuous logic, nowadays known as the \emph{affine logic},  a systematic study of which is in  \cite{yaacov2024extremal} . Affine formulas are defined recursively starting from atomic formulas. Logical connectives are restricted to affine functions, while the role of quantifiers is still played by $\sup$ and $\inf$  (\cite[\S 2]{yaacov2024extremal}). Structures and interpretation of formulas are analogous to those in continuous logic.  The operation of taking direct integrals of measurable fields of affine structures preserves the affine theory (note that for tracial von Neumann algebras this is true only if they are considered with respect to the $\|\cdot\|_1$ norm, see Lemma~\ref{L.affine.12}). 
A preservation under direct integrals of tracial von Neumann algebras for certain convex formulas had been proven in \cite{farah3024tracial} in a work motivated by the need to systematize the theory of tracially complete \cstar-algebras (\cite{carrion2023tracially}).

Moving to the other important class of self-adjoint operator algebras, we ask whether there is a \cstar-algebraic analog of Theorem~\ref{T.tensor}? 
Tensor products by finite-dimensional \cstar-algebras preserve elementary equivalence. By the \cstar-al\-geb\-raic analog of Lemma~\ref{L.eq}, if $A$ and $F$ are \cstar-algebras such that~$F$ is finite-dimensional, then $A\otimes F$ belongs to $A^{\eq}$ (\cite[\S 3]{Muenster}) and therefore the theory of $A\otimes F$ can be computed from the theory of $A$. However, tensoring by $C([0,1])$ do not preserve elementary equivalence (\cite[Corollary~3.10.4]{Muenster}). To the best of our current (lamentably limited) understanding, the following would be a plausible analog of Theorem~\ref{T.tensor}. 

\begin{conjecture}
	Suppose that $A$ is a \cstar-algebra all of whose irreducible representations are finite-dimensional and the Gelfand spectrum of its center  is totally disconnected. If $B$ and $C$ are elementarily equivalent \cstar-algebras, then $A\otimes B$ and $A\otimes C$ are elementarily equivalent.  
\end{conjecture}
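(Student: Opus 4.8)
The plan is to mimic the proof of Theorem~\ref{T.tensor} as closely as possible, replacing the measure-theoretic direct integral by a topological bundle over the Gelfand spectrum of the center. Two ingredients made Theorem~\ref{T.tensor} work: a structure theorem for type~I algebras (Lemma~\ref{L.rho}), writing them as $\prod_{m\geq 1}\bbM_m(L^\infty(X_m,\mu_m))$, together with the rigidity of this decomposition under elementary equivalence (Proposition~\ref{P.typeI}); and a Feferman--Vaught theorem for direct integrals (Theorem~\ref{T.FV-integral}) combined with the fact that tensoring by a finite-dimensional factor stays inside the $\eq$ (Lemma~\ref{L.eq}). I would try to establish \cstar-analogs of each.

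First I would record the structure theory. A liminal \cstar-algebra all of whose irreducible representations are finite-dimensional, and whose center has totally disconnected Gelfand spectrum $X$, is a $C(X)$-algebra whose fibers are finite-dimensional \cstar-algebras; by lower semicontinuity of the fiber dimension and total disconnectedness of $X$ one expects to refine $X$ into a clopen partition $X=\bigsqcup_i Y_i$ on which the fiber is a fixed finite-dimensional \cstar-algebra $F_i$, so that $A\rs Y_i\cong C(Y_i)\otimes F_i$. This is the topological counterpart of the decomposition in Lemma~\ref{L.rho}, with $\Clop(X)$ in the role of the measure algebra and the Stone (profinite) structure of $X$ replacing the atoms-plus-diffuse analysis. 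Since $A$ is type~I it is nuclear, so $A\otimes B=A\otimes_{\min}B$ and the statement is unambiguous.

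On each clopen block one has $F_i\otimes(C(Y_i)\otimes B)$, and I would invoke the \cstar-analog of Lemma~\ref{L.eq} quoted in the concluding remarks --- that $F\otimes B$ lies in $B^{\eq}$ for finite-dimensional $F$ --- to reduce to understanding $C(Y_i)\otimes B$. Then $B\equiv C$ should yield $C(Y_i)\otimes B\equiv C(Y_i)\otimes C$ by a \cstar-version of Corollary~\ref{FV-corollary 1}, and gluing the clopen blocks $Y_i$ --- and, in the case of unbounded fiber dimension, the countably many matrix sizes $m$ exactly as $\prod_{m\geq 1}$ was handled over $(\bbN,\mu)$ in the proof of Theorem~\ref{T.tensor} --- would give $A\otimes B\equiv A\otimes C$.

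The hard part will be the missing analog of Theorem~\ref{T.FV-integral}. The Feferman--Vaught theorem proved here is fundamentally measure-theoretic: it works with the language $\LMBA$, the pseudometric $d^M(a,b)=\int_\Omega d_\omega(a(\omega),b(\omega))\,d\mu(\omega)$, and the layer-cake estimates that convert a predicate's value into the $\mu$-distribution of its fiberwise values. For \cstar-algebras the global norm on a section is the \emph{supremum} of the fiberwise norms, and the base space carries no measure, so the datum to be glued is topological rather than probabilistic. One must therefore re-run the inductive argument with the measure algebra replaced by $\Clop(X)$ and integration replaced by a sup-type combination of clopen pieces, and in particular re-examine how the quantifier step --- case \eqref{4.FV} in the proof of Theorem~\ref{T.FV-integral}, where a section $b$ is assembled from fiberwise near-optima using separability --- survives when ``$>l/k$ on a set of positive measure'' becomes ``$>l/k$ on a nonempty clopen set.'' This is precisely where preservation is known to fail in general: the counterexamples of \cite{Muenster} and \cite{farah2022between} exploit non-definability of traces and stable rank $>1$, and the hypotheses on $A$ are designed to rule these pathologies out. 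The crux of the conjecture is to show that, under exactly these hypotheses, the sup-norm gluing over $X$ is definable enough for the Feferman--Vaught induction to close; I expect this topological Feferman--Vaught step, rather than the structure theory or the $\eq$ reduction, to be the main obstacle.
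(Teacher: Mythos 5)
This statement is not proved in the paper at all: it appears in the concluding remarks as an open conjecture, offered only as ``a plausible analog of Theorem~\ref{T.tensor}.'' So the question is whether your proposal closes it, and it does not --- as you yourself acknowledge. The entire content of the conjecture is the missing ``topological Feferman--Vaught'' theorem in which the measure algebra $(\cB,\mu)$ of Theorem~\ref{T.FV-integral} is replaced by $\Clop(X)$ and the $L^1$-type metric $d^M(a,b)=\int_\Omega d_\omega(a(\omega),b(\omega))\,d\mu(\omega)$ is replaced by the sup-norm of a continuous field. Every step of the paper's induction --- the layer-cake estimate in the atomic case, the distribution sets $Z^\zeta_t[\bar a]$, and above all the quantifier step, where the witness $b$ is assembled fiberwise and lies in the direct integral because it only needs to be \emph{measurable} --- uses the measure essentially. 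In the $C(X)$-setting a pointwise near-optimal choice of witnesses need not admit a norm-continuous selection, and this is exactly where the known failures (tensoring with $C([0,1])$, \cite[Corollary~3.10.4]{Muenster}) live. Identifying this obstacle, as you do, is not the same as overcoming it; as a proof the proposal is incomplete at its central step.

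There is also a concrete error in the step you treat as routine structure theory. You claim the hypotheses yield a clopen partition $X=\bigsqcup_i Y_i$ with $A\rs Y_i\cong C(Y_i)\otimes F_i$. This is false. Let $X=\{0\}\cup\{1/n : n\geq 1\}$ and $A=\{f\in C(X,\bbM_2(\bbC)) : f(0)\text{ is diagonal}\}$. All irreducible representations of $A$ are finite-dimensional (dimension $2$ at the points $1/n$, dimension $1$ at $0$), and $Z(A)\cong C(X)$ has totally disconnected spectrum, so $A$ satisfies the hypotheses of the conjecture; yet no clopen partition of $X$ separates the point $0$ (fiber $\bbC\oplus\bbC$) from the points $1/n$ (fiber $\bbM_2(\bbC)$), since $\{0\}$ is not open. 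This is precisely the divergence between the two settings: Lemma~\ref{L.rho} needs the homogeneous decomposition only to be \emph{measurable}, and dimension-jump loci can always be split off measurably, whereas your reduction needs them to be \emph{clopen}, which fails. (Compactness also blocks your parallel with $\prod_{m\geq 1}$: a clopen partition of a compact $X$ is finite, so unbounded fiber dimension cannot be handled blockwise at all.) Thus even granting a topological Feferman--Vaught theorem for trivial fields $C(Y)\otimes F$, your argument would only prove the conjecture for locally trivial $A$; the subhomogeneous dimension-jump algebras that your reduction silently excludes are the heart of the matter.
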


\bibliographystyle{amsplain}
%\bibliography{appalachian,ifmainbib}
\bibliography{ifmainbib}

\end{document}